\title{A derived category approach to Kempf's vanishing theorem}
\author{Alexander Samokhin}
\address{Math. Institut, Heinrich-Heine-Universit\"at, D-40204 D\"usseldorf, Germany}
\address{\it and}
\address{Institute for Information Transmission Problems, Moscow,  Russia}
\email{alexander.samokhin@gmail.com}
\jot \setlength{\topmargin}{0.1\topmargin}
\newcommand{\Oo}{\mathcal O}
\newcommand{\Pp}{\mathbb P}
\newcommand{\Ff}{\mathcal F}
\newcommand{\Ee}{\mathcal E}
\newcommand{\Ll}{\mathcal L}
\newcommand{\sk}{\sf k}
\newcommand*{\Hom}{\mathop{\mathrm Hom}\nolimits}
\newcommand*{\Dd}{\mathop{\mathrm D\kern0pt}\nolimits}
\newtheorem{theorem}{Theorem}[section]
\newtheorem{corollary}{Corollary}[section]
\newtheorem{lemma}{Lemma}[section]
\newtheorem{proposition}{Proposition}[section]
\newtheorem{remark}{Remark}[section]
\newtheorem{definition}{Definition}[section]
\long\def\comment#1{}
\begin{document}

\maketitle 

\begin{abstract}
We give a proof of the Andersen--Haboush identity 
(cf. \cite{Ann}, \cite{Hab}) that implies Kempf's vanishing theorem. Our argument is based on the  structure of derived categories of coherent sheaves on flag varieties over $\mathbb Z$.
\end{abstract}

\vspace{0.3cm}

\section{\bf Introduction}

\vspace*{0.3cm}

Let $\bf G$ be a split semisimple simply connected algebraic group over a perfect field $\sk$ of characteristic $p$. The weight $-\rho = - \sum \omega _i$, where $ \omega _i$ are  the fundamental weights of $\bf G$, is known to play a fundamental r\^ole in representation theory of $\bf G$. For $q=p^n, n\geq 1$, the Steinberg weight $(q-1)\rho$ is equally important in representation theory of semisimple groups in defining characteristic. In particular, there is a remarkable property that the corresponding line bundle $\Ll _{(q-1)\rho}$ on the flag variety ${\bf G}/{\bf B}$ enjoys: its pushforward under the $n$--th iteration of Frobenius morphism is a trivial vector bundle whose space of global sections is canonically identified with the Steinberg representation ${\sf St}_q$:


\begin{equation}\label{eq:Steinberg}
{\sf F}^n_{\ast}\Ll _{(q-1)\rho} = {\sf St}_q\otimes \Oo _{{\bf G}/{\bf B}}.
\end{equation}

\vspace*{0.2cm}

This was proven independently and at around the same time by Andersen in \cite{Ann} and by Haboush in \cite{Hab}. Back to the weight $-\rho$, isomorphism of vector bundles (\ref{eq:Steinberg}) is equivalent to saying that the line bundle $\Ll _{-\rho}$ is an "eigenvector" with respect to Frobenius morphism, i.e. ${\sf F}^n_{\ast}\Ll _{-\rho} = {\sf St}_q\otimes \Ll _{-\rho}$. This fact has many important consequences for representation theory of algebraic groups in characteristic $p$: in particular, the Kempf vanishing theorem \cite{Kem} easily follows from it (see \cite{Ann} and \cite{Hab}). The proofs of {\it loc.cit.} were essentially representation--theoretic. The goal of this note is to
prove isomorphism (\ref{eq:Steinberg}) using the structure of the derived category of coherent sheaves on the flag variety ${\bf G}/{\bf B}$. In a nutshell, the idea is as follows.
Given a smooth algebraic variety $X$ and a semiorthogonal decomposition $\langle \sf D_0,\sf D_1\rangle$ of the derived category $\Dd ^b(X)$ (see Section \ref{sec:Prelim} for the details), any object of $\Dd ^b(X)$ -- in particular, any vector bundle $\Ff$ on $X$, can be decomposed with respect to $\sf D_0$ and $\sf D_1$. Thus, if $\Ff$ is right orthogonal to $\sf D_1$, i.e. $\Hom _{\Dd ^b(X)}^{\cdot}(\sf D _1,\Ff)=0$, it automatically belongs to $\sf D_0$. It turns out that for a semiorthogonal decomposition of the derived category $\Dd ^b({\bf G}/{\bf B})$ into two pieces, one of which is the admissible subcategory $\langle \Ll _{-\rho}\rangle$ generated by the single line bundle $\Ll _{-\rho}$ and the other one being its left orthogonal $^{\perp}\langle \Ll _{-\rho}\rangle$, the bundle ${\sf F}^n_{\ast}\Ll _{-\rho}$ is right orthogonal to $^{\perp}\langle \Ll _{-\rho}\rangle$. Therefore, it should belong to the subcategory  $\langle \Ll _{-\rho}\rangle$. Being generated by a single exceptional bundle, the latter subcategory is equivalent to the derived category of vector spaces over $\sk$; thus, one has ${\sf F}^n_{\ast}\Ll _{-\rho}=\Ll _{-\rho}\otimes {\sf V}$ for some graded vector space $\sf V$. Since the left hand side of this isomorphism is a vector bundle, i.e. a pure object of $\Dd ^b({\bf G}/{\bf B})$, the graded vector space $\sf V$ should only have a non--trivial zero degree part, which is a vector space of dimension $q^{\rm dim({\bf G}/{\bf B})}$. Tensoring the both sides with $\Ll _{\rho}$ and taking the cohomology, one obtains an isomorphism ${\sf V} = {\rm H}^0({\bf G}/{\bf B},\Ll _{(q-1)\rho})={\sf St}_q$, and hence isomorphism (\ref{eq:Steinberg}).

Unfolding this argument takes the rest of the note. 
The key step consists of proving a special property of the semiorthogonal decomposition described above that allows to easily check  the orthogonality properties for the bundle ${\sf F}^n_{\ast}\Ll _{-\rho}$. 
This is done in Section \ref{sec:main}. Theorem \ref{th:Haboushth}, which is equivalent to isomorphism (\ref{eq:Steinberg}), immediately follows from it.

The present note was initially motivated by the author's computations of Frobenius pushforwards of homogeneous vector bundles on flag varieties. The derived localization theorem of \cite{BMR} implies, in particular, that for a regular weight $\chi$ (that is, for a weight having trivial stabilizer with respect to the dot--action of the (affine) Weyl group)
the bundle ${\sf F}_{\ast}\Ll _{\chi}$ is a generator in the derived category $\Dd ^b({\bf G}/{\bf B})$; in other words, there are sufficiently many  indecomposable summands of 
${\sf F}_{\ast}\Ll _{\chi}$ to generate the whole derived category $\Dd ^b({\bf G}/{\bf B})$. Knowing indecomposable summands of these bundles (e.g., for $p$--restricted weights) may clarify, in particular, cohomology vanishing patterns of line bundles on ${\bf G}/{\bf B}$. On the contrary, the weight $-\rho$ being the most singular, the thick category generated by the bundle ${\sf F}_{\ast}\Ll _{-\rho}$ "collapses" to the subcategory generated by the single line bundle $\Ll _{-\rho}$, which is encoded in isomorphism (\ref{eq:Steinberg}).

\subsection*{\bf Acknowledgements}
We are indebted to Roman Bezrukavnikov, Michel Brion, Jim Humphreys, Nicolas Perrin, and Alexander Polishchuk for their advice and valuable suggestions, and to Michel Van den Bergh for his interest in this work. The author gratefully acknowledges support from the strategic research fund of the Heinrich-Heine-Universit\"at D\"usseldorf (grant SFF F-2015/946-8).
We would also like to thank the ICTP, Trieste for providing excellent working facilities and the Deutsche Bahn for a truly creative atmosphere on their lonely night IC trains in the Rheinland-Pfalz region.

\subsection*{Notation}
Given a split semisimple simply connected algebraic group $\bf G$ over a perfect field $k$, let $\bf T$ denote a maximal torus of $\bf G$, and let ${\bf T}\subset {\bf B}$ be a Borel subgroup containing $\bf T$. The flag variety of Borel subgroups in $\bf G$ is denoted ${\bf G/B}$. Denote ${\rm X}({\bf T})$ the weight lattice, and let $\rm R$ and  $\rm R ^{\vee}$ denote the root and coroot lattices, respectively. The Weyl group ${\mathcal W}={\rm N}({\bf T})/{\bf T}$ acts on $X({\bf T})$ via the dot--action: if $w\in {\mathcal W}$, and 
$\lambda \in {\rm X}({\bf T})$, then $w\cdot \lambda = w(\lambda + \rho) - \rho$, where $\rho$ is the sum of fundamental weights. Let $\rm S$ be the set of simple roots relative to the choice of a Borel subgroup than contains $\bf T$. A parabolic subgroup of $\bf G$ is usually denoted by $\bf P$; in particular, for a simple root $\alpha \in \rm S$, denote ${\bf P}_{\alpha}$ the minimal parabolic subgroup of $\bf G$ associated to $\alpha$.
Given a weight $\lambda \in  {\rm X}({\bf T})$, denote $\Ll _{\lambda}$ the corresponding line bundle on ${\bf G}/{\bf B}$. Given a morphism $f:X\rightarrow Y$ between two schemes, we write $f_{\ast},f^{\ast}$ for the corresponding derived functors of push--forwards and pull--backs.

\vspace*{0.3cm}

\section{\bf  Some preliminaries}\label{sec:Prelim}

\vspace*{0.3cm}

\subsection{\bf Flag varieties of Chevalley groups over ${\mathbb Z}$}

Let ${\mathbb G}\rightarrow \mathbb Z$ be a semisimple Chevalley group scheme (a smooth affine group scheme over ${\rm Spec}(\mathbb Z)$ whose geometric fibres are connected semisimple algebraic groups), and ${\mathbb G}/{\mathbb B}\rightarrow \mathbb Z$ be the corresponding Chevalley flag scheme (resp., 
the corresponding parabolic subgroup scheme ${\mathbb G}/{\mathbb P}\rightarrow \mathbb Z$ for a standard parabolic subgroup scheme ${\mathbb P}\subset \mathbb G$ over ${\mathbb Z}$). Then ${\mathbb G/\mathbb P}\rightarrow {\rm Spec}({\mathbb Z})$ is flat and any line bundle $\Ll$ on ${\bf G/P}$ also comes from a line bundle $\mathbb L$ on ${\mathbb G/\mathbb P}$. Let $\sk$ be a field of arbitrary characteristic, and ${\bf G/B}\rightarrow {\rm Spec}(\sk)$ be the flag variety obtained by base change along ${\rm Spec}(\sk)\rightarrow {\rm Spec}(\mathbb Z)$.

\vspace*{0.1cm}

\subsection{\bf Cohomology of line bundles on flag varieties}\label{subsec:cohlinbunflags}

We recall first the classical Bott's theorem (see \cite{Dem}). Let ${\mathbb G}\rightarrow \mathbb Z$ be a semisimple Chevalley group scheme as above. Assume given a weight $\chi \in X({\mathbb T})$, and let $\Ll _{\chi}$ be the corresponding line bundle on ${\mathbb G}/{\mathbb B}$. The weight $\chi$ is called {\it singular}, if it lies on a wall of some Weyl chamber defined by $\langle -, \alpha ^{\vee}\rangle =0$ for some coroot $\alpha ^{\vee}\in {\rm R}^{\vee}$. Weights, which are not singular, are called {\it regular}. A weight $\chi$ such that $\langle \chi ,\alpha ^{\vee}\rangle \geq 0$ for all simple coroots $\alpha ^{\vee}$ is called {\it dominant}. Let $\sk$ be a field of characteristic zero, and ${\bf G}/{\bf B}\rightarrow {\rm Spec}(\sk)$ the corresponding flag variety over $\sk$. The weight $\chi \in X({\bf T})$ defines a line bundle $\Ll _{\chi}$ on 
${\bf G}/{\bf B}$.

\vspace*{0.2cm}

\begin{theorem}\cite[Theorem 2]{Dem}\label{th:Bott-Demazure_th}

\vspace*{0.2cm}

\begin{itemize}

\vspace*{0.2cm}

\item[(a)] If $\chi +\rho$ is singular, then ${\rm H}^i({\bf G}/{\bf B},\Ll _{\chi})= 0$ for all $i$.

\vspace*{0.2cm}

\item[(b)] If If $\chi + \rho$  is regular and dominant, then ${\rm H}^i({\bf G}/{\bf B},\Ll _{\chi}) = 0$ for $i>0$.

\vspace*{0.2cm}

\item[(c)]  If $\chi + \rho$  is regular, then ${\rm H}^i({\bf G}/{\bf B},\Ll _{\chi})\neq 0$ for the unique degree $i$, which is equal to $l(w)$. Here $l(w)$ is the length of an element $w$ of the Weyl group that takes $\chi$ to the dominant chamber, i.e. $w\cdot \chi \in X_{+}({\bf T})$. The cohomology group ${\rm H}^{l(w)}({\bf G}/{\bf B},\Ll _{\chi})$ is the irreducible $\bf G$--module of highest weight $w\cdot \chi$.

\end{itemize}

\end{theorem}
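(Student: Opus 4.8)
The plan is to deduce everything from a computation on a single $\Pp^1$-bundle. For each simple root $\alpha$, the projection $\pi_\alpha\colon{\bf G}/{\bf B}\to{\bf G}/{\bf P}_\alpha$ is a Zariski-locally trivial $\Pp^1$-bundle whose relative dualizing sheaf is $\Ll_{-\alpha}$, and $\Ll_\chi$ restricts on each fibre to $\Oo_{\Pp^1}(\langle\chi,\alpha^\vee\rangle)$. From the Leray spectral sequence one gets ${\rm H}^i({\bf G}/{\bf B},\Ll_\chi)={\rm H}^i({\bf G}/{\bf P}_\alpha,{\rm R}\pi_{\alpha\ast}\Ll_\chi)$, and since the fibres are curves at most one of ${\rm R}^0\pi_{\alpha\ast}\Ll_\chi$, ${\rm R}^1\pi_{\alpha\ast}\Ll_\chi$ is nonzero. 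First I would establish the basic lemma: if $\langle\chi,\alpha^\vee\rangle=-1$ then ${\rm R}\pi_{\alpha\ast}\Ll_\chi=0$, so ${\rm H}^i({\bf G}/{\bf B},\Ll_\chi)=0$ for all $i$; if $\langle\chi,\alpha^\vee\rangle\ge 0$ then ${\rm R}^1\pi_{\alpha\ast}\Ll_\chi=0$ and relative Serre duality along $\pi_\alpha$ identifies ${\rm R}^0\pi_{\alpha\ast}\Ll_\chi$ with ${\rm R}^1\pi_{\alpha\ast}\Ll_{s_\alpha\cdot\chi}$ up to a twist by a line bundle from the base, which upon comparing Leray spectral sequences gives ${\rm H}^{i}({\bf G}/{\bf B},\Ll_{s_\alpha\cdot\chi})\cong{\rm H}^{i-1}({\bf G}/{\bf B},\Ll_\chi)$; the case $\langle\chi,\alpha^\vee\rangle\le -2$ follows by applying the previous one to $s_\alpha\cdot\chi$, for which $\langle s_\alpha\cdot\chi,\alpha^\vee\rangle\ge 0$. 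In short: for a simple root $\alpha$ with $\langle\chi+\rho,\alpha^\vee\rangle\ne 0$ the cohomology of $\Ll_{s_\alpha\cdot\chi}$ equals that of $\Ll_\chi$ shifted by one degree, the direction of the shift being dictated by which of $\chi+\rho$, $s_\alpha(\chi+\rho)$ is the more dominant; and if $\langle\chi+\rho,\alpha^\vee\rangle=0$ then $\Ll_\chi$ is acyclic.

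Granting this lemma, part (b) is the Borel--Weil theorem: for $\chi$ dominant one has $\langle\chi,\alpha^\vee\rangle\ge 0$ for every simple $\alpha$, so ${\rm R}^1\pi_{\alpha\ast}\Ll_\chi=0$, and an induction on $\dim{\bf G}/{\bf B}$ — pushing down successive minimal parabolics and using that ${\rm R}^0\pi_{\alpha\ast}\Ll_\chi$ is a homogeneous bundle on ${\bf G}/{\bf P}_\alpha$ with a filtration by line bundles attached to dominant weights of the Levi — yields ${\rm H}^{>0}({\bf G}/{\bf B},\Ll_\chi)=0$, while ${\rm H}^0({\bf G}/{\bf B},\Ll_\chi)$ is the irreducible of highest weight $\chi$ (one may equally just cite Borel--Weil here). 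Part (c) then follows by induction on the length of the unique element $w$ with $w\cdot\chi\in X_+({\bf T})$: the case $l(w)=0$ is (b); if $l(w)>0$, pick a simple $\alpha$ with $\langle\chi+\rho,\alpha^\vee\rangle<0$; then $ws_\alpha$ carries $s_\alpha\cdot\chi$ to the dominant cone with $l(ws_\alpha)=l(w)-1$, the lemma gives ${\rm H}^i({\bf G}/{\bf B},\Ll_\chi)\cong{\rm H}^{i-1}({\bf G}/{\bf B},\Ll_{s_\alpha\cdot\chi})$, and the inductive hypothesis identifies the right-hand side with ${\rm H}^{i-l(w)}({\bf G}/{\bf B},\Ll_{w\cdot\chi})$, which by (b) is nonzero exactly in degree $l(w)$ and then is the irreducible of highest weight $w\cdot\chi$.

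For part (a), suppose $\langle\chi+\rho,\beta^\vee\rangle=0$ for some positive root $\beta$. I would move $\chi+\rho$ into the closed dominant cone by a chain of simple reflections, obtaining weights $\chi=\chi_0,\chi_1,\dots,\chi_N$ with $\chi_N+\rho$ dominant; the wall is carried along, so each $\chi_j+\rho$ is orthogonal to some coroot. Let $j_0$ be the first index at which $\chi_{j_0}+\rho$ lies on the wall of a \emph{simple} root $\gamma$, i.e. $\langle\chi_{j_0},\gamma^\vee\rangle=-1$; such a $j_0$ exists because $\chi_N+\rho$ is dominant and singular, hence orthogonal to a simple coroot (the stabiliser of a weight in ${\mathcal W}$ is generated by the simple reflections it contains). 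For every $j<j_0$ the weight $\chi_j+\rho$ avoids all simple walls, so the non-vanishing clause of the lemma applies at the step $j\to j+1$; chaining these isomorphisms gives ${\rm H}^\bullet({\bf G}/{\bf B},\Ll_\chi)\cong{\rm H}^{\bullet+c}({\bf G}/{\bf B},\Ll_{\chi_{j_0}})$ for some integer $c$, and ${\rm H}^\bullet({\bf G}/{\bf B},\Ll_{\chi_{j_0}})=0$ by the vanishing clause of the lemma applied to $\gamma$. Hence ${\rm H}^i({\bf G}/{\bf B},\Ll_\chi)=0$ for all $i$.

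I expect the main obstacle to be the $\Pp^1$-bundle lemma itself — proving the fibrewise computation together with the clean one-step degree-shift isomorphism, where the precise form of the relative dualizing sheaf $\Ll_{-\alpha}$ and a careful application of relative duality enter. The only other delicate point is the bookkeeping in part (a): one must check that the chain of simple reflections chosen to reach the dominant cone never meets a simple wall before index $j_0$ — which is exactly the configuration at which the lemma switches from "isomorphism up to a shift" to "acyclicity," and hence where the argument rightly terminates — and one must invoke the fact that a dominant singular weight necessarily lies on a simple wall.
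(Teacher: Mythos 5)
The paper does not prove this statement; it is quoted verbatim from Demazure's note \emph{A very simple proof of Bott's theorem} (\cite[Theorem 2]{Dem}), and your proposal is essentially a faithful reconstruction of Demazure's own argument: the $\Pp^1$-bundle lemma for each $\pi_\alpha$, the one-step degree shift $H^{i}(\Ll_{s_\alpha\cdot\chi})\cong H^{i-1}(\Ll_\chi)$, induction on $l(w)$ for (b) and (c), and the observation that a dominant singular weight lies on a simple wall for (a). The proof is correct as outlined; the one point worth making explicit is that the relative-duality identification of $\pi_{\alpha\ast}\Ll_\chi$ with $R^1\pi_{\alpha\ast}\Ll_{s_\alpha\cdot\chi}$ amounts to the self-duality of the induced module $\mathrm{Ind}_{\bf B}^{{\bf P}_\alpha}\chi$ for the $SL_2$-Levi, which is exactly where characteristic zero is used and why the lemma (and the theorem) fail in characteristic $p$ --- consistent with the paper's Remark \ref{rem:Demazure_bits_over_Z}, which retains only the acyclicity clause over $\mathbb Z$.
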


\vspace*{0.2cm}

\begin{remark}\label{rem:Demazure_bits_over_Z}
{\rm Some bits of Theorem \ref{th:Bott-Demazure_th} are still true over $\mathbb Z$: if a weight
$\chi$ is such that $\langle \chi + \rho, \alpha ^{\vee}\rangle =0$ for some simple root $\alpha$, then the corresponding line bundle is acyclic. Indeed, Lemma from 
\cite[Section 2]{Dem} holds over fields of arbitrary characteristic. Besides this, however, very little of Theorem \ref{th:Bott-Demazure_th} holds over $\mathbb Z$ (see \cite[Part II, Chapter 5]{Jan}).}
\end{remark}

\vspace*{0.1cm}

From now on, unless specified otherwise, the base field $\sf k$ is assumed to be a perfect field of characteristic $p>0$. 

\subsection{\bf Kempf's vanishing theorem}\label{subsec:Kempf_vanishing}
 
Kempf's vanishing theorem, originally proven by Kempf in \cite{Kem}, and subsequently by Andersen \cite{Ann} and Haboush \cite{Hab} with shorter representation--theoretic proofs (see also \cite[Part II, Chapter 4]{Jan}), states that given a dominant weight $\chi \in X(\bf T)$, the cohomology groups ${\rm H}^i({\bf G}/{\bf B},\Ll _{\chi})$ vanish in positive degrees, i.e. ${\rm H}^i({\bf G}/{\bf B},\Ll _{\chi})=0$ for $i>0$. This theorem is ubiquitous in representation theory of algebraic groups in characteristic $p$.
For convenience of the reader, we briefly recall how it can be obtained from the main isomorphism ${\sf F}^n_{\ast}\Ll _{(q-1)\rho} = {\sf St}_q\otimes \Oo _{{\bf G}/{\bf B}}$ (recall that $q=p^n$ for $n\in \mathbb N$). From $\langle \chi ,\alpha ^{\vee}\rangle \geq 0$ one obtains $\langle \chi + \rho ,\alpha ^{\vee}\rangle > 0$ for all simple coroots $\alpha ^{\vee}$. By \cite[Part II, Proposition 4.4]{Jan}, the line bundle $\Ll _{\chi +\rho}$ is ample on ${\bf G}/{\bf B}$. Consider the weight $q(\chi + \rho) - \rho = q\chi + (q-1)\rho$. Since  $\Ll _{\chi +\rho}$ is ample, one can choose $n\in \mathbb N$ large enough so that  the line bundle $\Ll _{q(\chi + \rho)}$ be very ample. From the well--known properties of the Frobenius morphism it then follows 

\vspace*{0.2cm}

\begin{equation}
{\rm H}^i({\bf G}/{\bf B},\Ll _{q\chi + (q-1)\rho})={\rm H}^i({\bf G}/{\bf B},\Ll _{\chi}\otimes {\sf F}^n_{\ast}\Ll _{(q-1)\rho})={\rm H}^i({\bf G}/{\bf B},\Ll _{\chi})\otimes {\sf St}_q.
\end{equation}

\vspace*{0.3cm}

Now the left hand side group vanishes for $i>0$ by Serre's vanishing, the line bundle $\Ll _{q(\chi + \rho)}$ being very ample. Hence, ${\rm H}^i({\bf G}/{\bf B},\Ll _{\chi})=0$ for $i>0$ as well.

\subsection{Derived categories of coherent sheaves}\label{subsec:dercatcohsheaves}

\vspace*{0.1cm}

The content of this section can be found, e.g., in  \cite[Section 1.2, 1.4]{Huyb}.

Let $\sk$ be a field. Assume given a $\sk$--linear triangulated category ${\sf D}$, equipped with a shift functor $[1]\colon {\sf D}\rightarrow {\sf D}$.  For two
objects $A, B \in {\sf D}$ let $\Hom ^{\bullet}_{\sf D}(A,B)$ be
the graded $\sk$-vector space $\oplus _{i\in \mathbb Z}\Hom _{\sf
  D}(A,B[i])$. 
Let ${\sf A}\subset {\sf D}$ be a full triangulated subcategory,
that is a full subcategory of ${\sf D}$ which is closed under shifts and forming distinguished triangles.

\begin{definition}\label{def:orthogonalcat}
The right orthogonal ${\sf A}^{\perp}\subset \sf D$ is defined to be
the full subcategory

\vspace*{0.2cm}

\begin{equation}
{\sf A}^{\perp} = \{B \in {\sf D}\colon \Hom _{\sf D}(A,B) = 0 \}
\end{equation}

\vspace*{0.2cm}

\noindent for all $A \in {\sf A}$. The left orthogonal $^{\perp}{\sf
  A}$ is defined similarly. 

\end{definition}

\begin{definition}\label{def:admissible}
A full triangulated subcategory ${\sf A}$ of ${\sf D}$ is called
{\it right admissible} if the inclusion functor ${\sf A}\hookrightarrow {\sf
  D}$ has a right adjoint. Similarly, ${\sf A}$ is called {\it left
  admissible} if the inclusion functor has a left adjoint. Finally,
${\sf A}$ is {\it admissible} if it is both right and
left admissible.
\end{definition}

If a full triangulated category ${\sf A}\subset {\sf D}$ is right admissible then every object $X\in {\sf D}$ fits into a distinguished triangle
  
\vspace*{0.2cm}
  
\begin{equation}
\dots \longrightarrow  Y\longrightarrow X\longrightarrow Z\longrightarrow Y[1]\rightarrow \dots
\end{equation}

\vspace*{0.2cm}

\noindent with $Y\in {\sf A}$ and $Z\in {\sf A}^{\perp}$. One then
says that there is a semiorthogonal decomposition of ${\sf D}$ into
the subcategories $({\sf A}^{\perp}, \ {\sf A})$. More generally,
assume given a sequence of full triangulated subcategories ${\sf
  A}_1,\dots,{\sf A}_n \subset {\sf D}$. Denote $\langle {\sf
  A}_1,\dots,{\sf A}_n\rangle$ the triangulated subcategory of ${\sf
  D}$ generated by ${\sf A}_1,\dots,{\sf A}_n$.

\begin{definition}\label{def:semdecomposition}
A sequence $({\sf A}_1,\dots,{\sf A}_n)$ of admissible subcategories of
${\sf D}$ is called {\it semiorthogonal} if ${\sf
  A}_i\subset {\sf A}_j^{\perp}$ for $1\leq i < j\leq n$,
and ${\sf A}_i\subset {^{\perp}{\sf A}_j}$ for $1\leq j < i\leq n$.
The sequence $({\sf A}_1,\dots,{\sf A}_n)$ is called a {\it semiorthogonal
  decomposition} of ${\sf D}$ if $\langle {\sf A}_1, \dots, {\sf A}_n
\rangle^{\perp} = 0$, that is ${\sf D} = \langle {\sf A}_1,\dots,{\sf A}_n\rangle$.
\end{definition}

\vspace{0.2cm}

\begin{lemma}\label{lem:admiss_orthogonal}
For a semi--orthogonal decomposition ${\sf D}= \langle {\sf A} ,{\sf B}\rangle$, the subcategory ${\sf A}$ is left admissible and the subcategory ${\sf B}$ is right admissible. Conversely, if ${\sf A}\subset \sf D$ is left (resp. right) admissible, then there is a semi--orthogonal decomposition ${\sf D}=\langle {\sf A}, ^{\perp}{\sf A}\rangle$ (resp. ${\sf D}=\langle {\sf A}^{\perp}, {\sf A}\rangle$).
\end{lemma}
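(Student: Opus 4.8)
The plan is to derive both halves of Lemma~\ref{lem:admiss_orthogonal} directly from the distinguished triangle attached to an admissible subcategory, using only the defining adjunction property and the basic bookkeeping of orthogonality in a triangulated category. The two statements are essentially formal, so the work is in organizing the implications rather than in any deep construction.

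First I would prove the converse direction, which is really the heart of the matter. Suppose ${\sf A}\subset{\sf D}$ is right admissible, so the inclusion $i\colon{\sf A}\hookrightarrow{\sf D}$ has a right adjoint $i^{!}$. For any $X\in{\sf D}$, the counit of the adjunction gives a morphism $i i^{!}X\to X$; complete it to a distinguished triangle
\begin{equation}
i i^{!}X\longrightarrow X\longrightarrow Z_X\longrightarrow (ii^{!}X)[1].
\end{equation}
I would then check that $Z_X\in{\sf A}^{\perp}$: applying $\Hom_{\sf D}(A,-)$ for $A\in{\sf A}$ to this triangle and using $\Hom_{\sf D}(A,ii^{!}X)=\Hom_{\sf A}(A,i^{!}X)$ together with the fact that the counit induces an isomorphism on these $\Hom$-groups (a standard consequence of $i$ being fully faithful), one sees that $\Hom_{\sf D}(A,Z_X)=0$ and also $\Hom_{\sf D}(A,Z_X[-1])=0$ by the same argument applied to a shift, hence the connecting maps force $\Hom_{\sf D}(A,Z_X)=0$. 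Therefore every $X$ sits in a triangle with first term in ${\sf A}$ and third term in ${\sf A}^{\perp}$, which is exactly the statement that $({\sf A}^{\perp},{\sf A})$ is a semiorthogonal decomposition: the semiorthogonality ${\sf A}^{\perp}\subset{^{\perp}{\sf A}}$ is tautological, and $\langle{\sf A}^{\perp},{\sf A}\rangle={\sf D}$ because any $X$ is built from objects of the two pieces by a single triangle. (Note one also needs ${\sf A}^{\perp}$ to be admissible to match Definition~\ref{def:semdecomposition}; this follows because the assignment $X\mapsto Z_X$ is functorial and furnishes the required adjoint to ${\sf A}^{\perp}\hookrightarrow{\sf D}$.) The left-admissible case is strictly dual, using the unit of the adjunction with the left adjoint $i^{*}$ and the triangle $Z_X'\to X\to ii^{*}X$, giving ${\sf D}=\langle{\sf A},{^{\perp}{\sf A}}\rangle$.

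For the forward direction, suppose ${\sf D}=\langle{\sf A},{\sf B}\rangle$. By Definition~\ref{def:semdecomposition} every $X$ fits in a triangle $B_X\to X\to A_X\to B_X[1]$ with $A_X\in{\sf A}$ and $B_X\in{\sf B}$, and semiorthogonality gives $\Hom_{\sf D}^{\bullet}({\sf B},{\sf A})=0$. I would argue that $X\mapsto A_X$ is the left adjoint to the inclusion ${\sf A}\hookrightarrow{\sf D}$: for any $A'\in{\sf A}$, applying $\Hom_{\sf D}(-,A')$ to the triangle and using $\Hom_{\sf D}(B_X,A')=0=\Hom_{\sf D}(B_X[1],A')$ (both by semiorthogonality) yields $\Hom_{\sf D}(A_X,A')\xrightarrow{\ \sim\ }\Hom_{\sf D}(X,A')$, which is the desired adjunction isomorphism; functoriality of $X\mapsto A_X$ follows from uniqueness of the triangle up to unique isomorphism, itself a consequence of the orthogonality. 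Hence ${\sf A}$ is left admissible. Symmetrically, $X\mapsto B_X$ is right adjoint to ${\sf B}\hookrightarrow{\sf D}$, so ${\sf B}$ is right admissible.

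The only genuinely delicate point — and the step I expect to need the most care — is establishing that the assignments $X\mapsto A_X$, $X\mapsto Z_X$, etc., are well defined functors, i.e.\ that the completing object in each triangle is unique up to a \emph{unique} isomorphism and depends functorially on $X$. This is the classical argument that semiorthogonality kills the non-uniqueness inherent in the octahedron: given two candidate triangles, the orthogonality of the two pieces forces the comparison morphism on the middle terms to lift uniquely, and a routine diagram chase with the five lemma in the triangulated setting finishes it. Everything else is formal manipulation of adjunctions and long exact sequences of $\Hom$-groups.
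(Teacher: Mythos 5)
The paper states this lemma without proof --- it is quoted as a standard fact from the theory of semiorthogonal decompositions (Bondal; see also Huybrechts) --- so there is no argument of the paper's to compare against. Your proof is the standard one and is correct in both directions: in the forward direction the long exact sequence of $\Hom(-,A')$ (resp.\ $\Hom(B',-)$) applied to the decomposition triangle $B_X\to X\to A_X$ exhibits $X\mapsto A_X$ as left adjoint to ${\sf A}\hookrightarrow{\sf D}$ and $X\mapsto B_X$ as right adjoint to ${\sf B}\hookrightarrow{\sf D}$; in the converse direction the cone of the (co)unit lands in the orthogonal because the (co)unit induces isomorphisms on all $\Hom$-groups out of (into) the subcategory. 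Two small remarks. First, once you know $\Hom(A_X,A')\xrightarrow{\sim}\Hom(X,A')$ for all $A'\in{\sf A}$, functoriality of $X\mapsto A_X$ is automatic by Yoneda ($A_X$ represents $\Hom_{\sf D}(X,i(-))$), so the ``delicate'' uniqueness-of-triangles discussion at the end can be bypassed. Second, your parenthetical that ${\sf A}^{\perp}$ is admissible only yields that ${\sf A}^{\perp}$ is \emph{left} admissible (the functor $X\mapsto Z_X$ is a left adjoint to its inclusion); right admissibility of ${\sf A}^{\perp}$ does not follow in general. This is a looseness in the paper's own Definition~\ref{def:semdecomposition}, which asks the components of a semiorthogonal decomposition to be admissible on both sides; the lemma is correct under the usual weaker reading in which one only requires the filtration triangles, and that is all the paper ever uses. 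Also note the inclusion you call tautological should read ${\sf A}\subset{}^{\perp}({\sf A}^{\perp})$ (equivalently ${\sf A}^{\perp}\subset{\sf A}^{\perp}$), not ${\sf A}^{\perp}\subset{}^{\perp}{\sf A}$, which is false in general; the substance of your claim is unaffected.
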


\begin{definition}\label{def:exceptcollection}
An object $E \in \sf D$ of a $\sk$--linear triangulated category $\sf D$ is said to be exceptional if there is an isomorphism of graded $\sk$-algebras 

\vspace{0.2cm}

\begin{equation}
\Hom _{\sf D}^{\bullet}(E,E) = \sk.
\end{equation}

\vspace{0.2cm}

A collection of exceptional objects $(E_0,\dots,E_n)$ in $\sf D$ is called 
exceptional if for $1 \leq i < j \leq n$ one has

\vspace{0.2cm}

\begin{equation}
\Hom _{\sf D}^{\bullet}(E_j,E_i) = 0.
\end{equation}

\vspace{0.2cm}

\end{definition}

Denote by $\langle E_0,\dots,E_n \rangle \subset {\sf D}$ the full
triangulated subcategory generated by the exceptional objects $E_0,\dots,E_n$. One 
proves \cite[Lemma 1.58]{Huyb} that such a category is admissible. \\

Given a smooth algebraic variety $X$ over a field $\sk$, denote  $\Dd ^b(X)$ the bounded derived category of coherent sheaves, and let $\Dd ({\rm QCoh}(X))$ denote 
the unbounded derived category of quasi--coherent sheaves. These are $\sk$--linear triangulated categories. Let $\Ee$ be a vector bundle of rank $r$ on $X$, and consider the associated projective bundle $\pi : \Pp (\Ee)\rightarrow X$. Denote $\Oo _{\pi}(-1)$ the line bundle on $\Pp (\Ee)$ of relative degree $-1$, such that $\pi _{\ast}\Oo _{\pi}(1)=\Ee ^{\ast}$. 
One has \cite[Corollary 8.36]{Huyb}:

\vspace{0.2cm}

\begin{theorem}\label{th:Orvlovth}
The category $\Dd ^b(\Pp (\Ee))$ has a semiorthogonal decomposition: 

\vspace{0.1cm}

\begin{equation}
\Dd ^b(\Pp (\Ee)) = \langle \pi ^{\ast}\Dd ^b(X)\otimes \Oo _{\pi}(-r+1),\dots ,  \pi ^{\ast}\Dd ^b(X)\otimes \Oo _{\pi}(-1),\pi ^{\ast}\Dd ^b(X)\rangle .
\end{equation}

\vspace{0.1cm}

\end{theorem}

\vspace{0.2cm}

We also need some basic facts about generators in triangulated categories  (see \cite{Neem}).

\begin{definition}
Let $\sf D$ be a $\sk$--linear triangulated category. An object $C$ of $\sf D$  is called compact if for any coproduct of objects one has 
$\Hom _{\sf D}(C,\coprod _{\lambda \in \Lambda} X_{\lambda}) = \coprod _{\lambda \in \Lambda}\Hom _{\sf D}(C,X)$.
\end{definition}

\begin{definition}
A $\sk$--linear triangulated category $\sf D$ is called compactly generated if $\sf D$ contains small coproducts, and there exists a small set $\sf T$ of compact objects of $\sf D$, such that $\Hom _{\sf D}({\sf T},X) = 0$ implies $X = 0$. In other words, if $X$ is an object of $\sf D$, and for every $T\in \sf T$ one has $\Hom _{\sf D}(T,X) = 0$, then $X$ must be the zero object.
\end{definition}

\begin{definition}
Let $\sf D$ be a compactly generated triangulated category. A set $\sf T$ 
of compact objects of $\sf D$ is called a generating set if $\Hom _{\sf D}({\sf T},X)=0$ implies $X=0$ and $\sf T$ is closed under the shift functor, i.e. ${\sf T} = {\sf T}[1]$.
\end{definition}

\begin{definition}
Let $X$ be a quasi-compact, separated scheme. An object $C\in \Dd ({\rm QCoh}(X))$ is called perfect if, locally on $X$, it is isomorphic to a bounded complex of locally free sheaves of finite type.
\end{definition}

\begin{proposition}{\cite[Example 1.10]{Neem}}\label{prop:ample_line_bundle_gen_set}
Let $X$ be a quasi--compact, separated scheme, and $\Ll$ be an ample line bundle on $X$. Then the set $\langle \Ll ^{\otimes m}[n]\rangle, m,n\in \mathbb Z$ is a generating set for $\Dd ({\rm QCoh}(X))$.
\end{proposition}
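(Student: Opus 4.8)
The plan is to prove Proposition~\ref{prop:ample_line_bundle_gen_set} by verifying the two defining conditions of a generating set: the set $\{\Ll^{\otimes m}[n] : m,n\in\mathbb Z\}$ is visibly closed under the shift functor, so the only substantive point is to show that an object $X\in\Dd(\mathrm{QCoh}(X))$ with $\Hom_{\Dd(\mathrm{QCoh}(X))}(\Ll^{\otimes m}, X[n]) = 0$ for all $m,n\in\mathbb Z$ must vanish. Note that $\Hom_{\Dd(\mathrm{QCoh}(X))}(\Ll^{\otimes m}, X[n]) = \Hom_{\Dd(\mathrm{QCoh}(X))}(\Oo_X, X\otimes\Ll^{\otimes(-m)}[n]) = \RR^n\Gamma(X, X\otimes\Ll^{\otimes(-m)})$ (hypercohomology), so the hypothesis says precisely that all hypercohomology groups of all negative-or-positive twists of $X$ vanish. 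Each $\Ll^{\otimes m}$ is a perfect complex, hence a compact object of $\Dd(\mathrm{QCoh}(X))$ on the quasi-compact separated scheme $X$, so the set is indeed a set of compact objects and it remains only to check the generating property.

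First I would reduce to showing that the cohomology sheaves $\mathcal H^i(X)$ all vanish; since $X$ lies in the unbounded derived category one must be slightly careful, but the spectral sequence $\RR^p\Gamma(X, \mathcal H^q(X)\otimes\Ll^{\otimes m}) \Rightarrow \RR^{p+q}\Gamma(X, X\otimes\Ll^{\otimes m})$, together with ampleness of $\Ll$, will let me isolate the lowest non-vanishing cohomology sheaf. Concretely, suppose $\mathcal H^{q_0}(X)\neq 0$ is the bottom non-zero cohomology sheaf (or, in the unbounded case, work with truncations). For $m$ sufficiently negative, Serre's theorem gives $\RR^p\Gamma(X, \mathcal H^{q_0}(X)\otimes\Ll^{\otimes m}) = \RR^p\Gamma(X, \mathcal H^{q_0}(X)(-N)) $ which, after replacing $\Ll^{\otimes(-m)}$ by a sufficiently ample twist, has vanishing higher cohomology and non-vanishing $\RR^0$ as long as $\mathcal H^{q_0}(X)\neq 0$: indeed an ample twist of any non-zero quasi-coherent sheaf is globally generated and in particular has a non-zero global section. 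The spectral sequence edge map then forces $\RR^{q_0}\Gamma(X, X\otimes\Ll^{\otimes m})\neq 0$ for such $m$, contradicting the hypothesis. Hence every cohomology sheaf of $X$ is zero, so $X\cong 0$.

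The main obstacle I anticipate is the bookkeeping in the unbounded setting: a priori $X$ may have cohomology sheaves in infinitely many degrees in both directions, so there is no honest ``bottom'' cohomology sheaf and the hypercohomology spectral sequence need not converge naively. The clean way around this is to invoke that $\Dd(\mathrm{QCoh}(X))$ on a quasi-compact separated scheme is equivalent to the derived category of its heart via a suitable $t$-structure, and to argue with truncation functors $\tau_{\leq n}$ and $\tau_{\geq n}$: if $X\neq 0$ some truncation $\tau_{\geq q_0}\tau_{\leq q_0}X = \mathcal H^{q_0}(X)[-q_0]\neq 0$, and one tests $\Hom$ against $\Ll^{\otimes m}[n]$ through the truncation triangles, using that for $m\ll 0$ the higher cohomology of $\mathcal H^{q_0}(X)\otimes\Ll^{\otimes m}$ vanishes by Serre so the relevant $\Hom$ group reads off $\RR^0\Gamma$ of a sufficiently positive twist of a non-zero sheaf, which is non-zero. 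Alternatively — and perhaps more in the spirit of \cite{Neem} — one cites that the compact objects of $\Dd(\mathrm{QCoh}(X))$ are exactly the perfect complexes (Neeman--Thomason), observes that $\Oo_X$ generates because $\Ll$ is ample, and deduces the claim by twisting; this shortcut trades the elementary spectral-sequence argument for a heavier cited input. I would present the elementary truncation argument as the main proof and remark on the alternative.
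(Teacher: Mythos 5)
The paper does not actually prove this proposition --- it is quoted from \cite[Example 1.10]{Neem} --- so there is no in-paper argument to compare against; I will assess your proof on its own terms. Your overall strategy (test a putative orthogonal object $Y$ against all twists, isolate a non-zero cohomology sheaf, and use ampleness to manufacture a non-zero section) is the right shape, but two of your key steps do not work as stated. The more serious one is the appeal to Serre vanishing for the sheaves $\mathcal H^q(Y)$: these are only quasi-coherent, not coherent (and $X$ is not assumed Noetherian, let alone proper), and Serre vanishing fails for general quasi-coherent sheaves --- for instance $\bigoplus_{k\geq 0}\Oo_{\mathbb P^1}(-k)$ has non-zero $H^1$ after every twist. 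Fortunately you do not need it: if $q_0$ is the \emph{lowest} degree with $\mathcal H^{q_0}(Y)\neq 0$, then in the hypercohomology spectral sequence the corner term $E_2^{0,q_0}=H^0(X,\mathcal H^{q_0}(Y)\otimes\Ll^{\otimes m})$ has no incoming differentials and all outgoing differentials land in groups built from $\mathcal H^{<q_0}(Y)=0$, so $\mathbb H^{q_0}(X,Y\otimes\Ll^{\otimes m})\cong H^0(X,\mathcal H^{q_0}(Y)\otimes\Ll^{\otimes m})$ for \emph{every} $m$; global generation of an ample twist of a non-zero finite-type subsheaf of $\mathcal H^{q_0}(Y)$ then gives a non-zero section and the desired contradiction in the bounded-below case.

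The second gap is the unbounded-below case, which you flag but do not resolve: $\tau_{\leq q_0}Y$ is still unbounded below, so the same difficulty recurs, and the spectral sequence need not converge. The missing ingredient is that a quasi-compact separated scheme has finite cohomological dimension $d$ for quasi-coherent sheaves (\v{C}ech cohomology with respect to a finite affine cover), whence $\mathbb H^{j}(X,Z)$ depends only on the truncation $\tau_{\geq j-d}Z$; this reduces each group $\Hom(\Ll^{\otimes m},Y[n])$ to a bounded-below complex and legitimizes the previous paragraph. (Alternatively, one can avoid spectral sequences altogether, closer to Neeman's own argument: for $f\in\Gamma(X,\Ll^{\otimes k})$ with $X_f$ affine, ${\rm R}\Gamma(X_f,Y)$ is the filtered colimit of ${\rm R}\Gamma(X,Y\otimes\Ll^{\otimes jk})$ along multiplication by $f$, hence vanishes by hypothesis, and affineness forces $Y|_{X_f}=0$; such $X_f$ cover $X$ by ampleness.) Finally, your proposed shortcut --- ``$\Oo_X$ generates because $\Ll$ is ample, then deduce the claim by twisting'' --- is simply false: already on $\mathbb P^1$ one has $\Hom^{\bullet}(\Oo,\Oo(-1))=0$, so $\Oo_X$ alone is never a generating set in this sense; the content of the statement is precisely that all powers of $\Ll$ are needed simultaneously.
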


Finally, recall that given two smooth varieties $X$ and $Y$ over $\sk$, an object 
$\mathcal P \in \Dd ^b(X\times Y)$ gives rise to an integral transform $\Phi _{\mathcal P}(-): = {\pi _Y}_{\ast}(\pi _{X}^{\ast}(-)\otimes \mathcal P)$ between 
$\Dd ^b(X)$ and $\Dd ^b(Y)$, where $\pi _{X}, \pi _{Y}$ are the projections of $X\times Y$ onto corresponding factors.

\begin{proposition}{\cite[Proposition 5.1]{Huyb}}\label{prop:F-M_composition}
Let $X, Y,$ and $Z$ be smooth projective varieties over a field $\sk$. Consider objects $\mathcal P\in \Dd ^b(X\times Y)$ and $\mathcal Q\in \Dd ^b(Y\times Z)$. Define the object $\mathcal R\in \Dd ^b(X\times Z)$ by the formula
${\pi _{XZ}}_{\ast}(\pi _{XY}^{\ast}\mathcal P\otimes \pi _{YZ}^{\ast}\mathcal Q)$, where $\pi _{XZ}, \pi _{XY}$, and $\pi _{YZ}$ are the projections from 
$X\times Y\times Z$ to $X\times Z$ (resp., to $X\times Y$, resp., to $Y\times Z$).
Then the composition $\Phi _{\mathcal Q}\circ \Phi _{\mathcal P}: \Dd ^b(X)\rightarrow \Dd ^b(Z)$ is isomorphic to the integral transform $\Phi _{\mathcal R}$.
\end{proposition}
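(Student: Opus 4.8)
The plan is to evaluate both functors on an arbitrary object $F\in\Dd^b(X)$ and to identify $\Phi_{\mathcal Q}\bigl(\Phi_{\mathcal P}(F)\bigr)$ with $\Phi_{\mathcal R}(F)$ by a chain of standard isomorphisms on the triple product $W:=X\times Y\times Z$. Besides the projections $\pi_{XY}\colon W\to X\times Y$, $\pi_{YZ}\colon W\to Y\times Z$ and $\pi_{XZ}\colon W\to X\times Z$ already named in the statement, write $q_X,q_Y,q_Z$ for the projections of $W$ onto its single factors, and keep the convention that $\pi$ with a single subscript always denotes the relevant projection from whichever product of two of the three varieties is under discussion; then $q_X=\pi_X\circ\pi_{XY}=\pi_X\circ\pi_{XZ}$, $q_Z=\pi_Z\circ\pi_{YZ}=\pi_Z\circ\pi_{XZ}$, and so on. Throughout, tensor products and the push/pull functors are derived, and the properness needed for the projection formula and for base change is guaranteed by the projectivity of $X$, $Y$ and $Z$.

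The single geometric input I would record first is that the commutative square with vertices $W$, $Y\times Z$, $X\times Y$, $Y$, whose two upper arrows are $\pi_{YZ}$ and $\pi_{XY}$ and whose two lower arrows are the projections onto $Y$, is Cartesian, exhibiting $W$ as the fibre product $(X\times Y)\times_Y(Y\times Z)$. Since $X$ is smooth, hence flat, over $\sk$, the projection $X\times Y\to Y$ is flat, the square is therefore Tor-independent, and flat base change yields a natural isomorphism $\pi_Y^{\ast}\circ\pi_{Y\ast}\simeq\pi_{YZ\ast}\circ\pi_{XY}^{\ast}$, where the inner $\pi_{Y\ast}$ is push-forward from $X\times Y$ and the outer $\pi_Y^{\ast}$ is pull-back to $Y\times Z$. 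Granting this, unwind $\Phi_{\mathcal Q}\bigl(\Phi_{\mathcal P}(F)\bigr)=\pi_{Z\ast}\bigl(\pi_Y^{\ast}\pi_{Y\ast}(\pi_X^{\ast}F\otimes\mathcal P)\otimes\mathcal Q\bigr)$, replace $\pi_Y^{\ast}\pi_{Y\ast}$ by $\pi_{YZ\ast}\pi_{XY}^{\ast}$ via base change, apply the projection formula along $\pi_{YZ}$ to absorb $\mathcal Q$ as $\pi_{YZ}^{\ast}\mathcal Q$ inside the push-forward, then use $\pi_{Z\ast}\circ\pi_{YZ\ast}=q_{Z\ast}$ and $\pi_{XY}^{\ast}\pi_X^{\ast}=q_X^{\ast}$ to arrive at $q_{Z\ast}\bigl(q_X^{\ast}F\otimes\pi_{XY}^{\ast}\mathcal P\otimes\pi_{YZ}^{\ast}\mathcal Q\bigr)$. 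Finally factor $q_Z=\pi_Z\circ\pi_{XZ}$ and $q_X=\pi_X\circ\pi_{XZ}$ and apply the projection formula once more, now along $\pi_{XZ}$, to pull $\pi_X^{\ast}F$ out of the push-forward; what remains is $\pi_{Z\ast}\bigl(\pi_X^{\ast}F\otimes\pi_{XZ\ast}(\pi_{XY}^{\ast}\mathcal P\otimes\pi_{YZ}^{\ast}\mathcal Q)\bigr)=\pi_{Z\ast}(\pi_X^{\ast}F\otimes\mathcal R)=\Phi_{\mathcal R}(F)$.

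Each individual step above is routine; the point that genuinely requires care — and hence the main obstacle — is functoriality. All the isomorphisms invoked (flat base change, the projection formula, and the compatibility of $(-)_{\ast}$ and $(-)^{\ast}$ with composition of morphisms) are natural transformations, and one must check that the composite isomorphism $\Phi_{\mathcal Q}\circ\Phi_{\mathcal P}\xrightarrow{\ \sim\ }\Phi_{\mathcal R}$ is natural in $F$, i.e. that one obtains an isomorphism of functors rather than a mere family of object-wise isomorphisms. Making this precise means fixing compatible models for the derived functors (injective or flat resolutions, or the coherent six-functor machinery) and verifying that the base-change and associativity coherences fit together along the chain; granting the standard compatibilities of derived push-forward, pull-back and tensor product on smooth projective varieties over a field, this bookkeeping goes through, and the resulting natural isomorphism $\Phi_{\mathcal Q}\circ\Phi_{\mathcal P}\simeq\Phi_{\mathcal R}$ completes the proof.
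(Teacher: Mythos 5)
Your proof is correct and is precisely the standard argument for composing integral transforms --- flat base change over the Cartesian square $(X\times Y)\times_Y(Y\times Z)\simeq X\times Y\times Z$, followed by two applications of the projection formula and the compatibility of push--forward and pull--back with composition --- which is the proof given in the cited source \cite{Huyb}; the paper itself states this proposition without proof, deferring entirely to that reference. Your closing remark that the object-wise isomorphisms must be checked to be natural in $F$ is the right point to flag, and it is handled by the standard coherences of the derived functors as you indicate.
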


\vspace{0.3cm}


\section{\bf Semiorthogonal decompositions for flag varieties}\label{sec:main}


\vspace*{0.3cm}

In order to prove Lemma \ref{lem:mainlemma} below, the key statement of this section, we need an auxiliary proposition which is a derived category counterpart of the main theorem of \cite{CPS}.

\begin{proposition}\label{prop:derverDemazure_char_for}
Let $\pi : {\bf G}/{\bf B}\rightarrow {\rm Spec}(\sk)$ be the structure morphism, and for a simple root $\alpha _i$ denote $\pi _{\alpha _i}: {\bf G}/{\bf B}\rightarrow {\bf G}/{\bf P}_{\alpha _i}$ the projection, a $\Pp ^1$--bundle over ${\bf G}/{\bf P}_{\alpha _i}$. Let $w_0$ be the longest element of $\mathcal W$, and let $s_{\alpha _1}s_{\alpha _2}\dots \cdot s_{\alpha _N}$ be a reduced expression of $w_0$. Then there is an isomorphism of functors:

\vspace*{0.2cm}

\begin{equation}\label{eq:WCF_dercat}
\pi ^{\ast}\pi _{\ast} = \pi _{\alpha _N}^{\ast}{\pi _{\alpha _N}}_{\ast}\pi _{\alpha _{N-1}}^{\ast}{\pi _{\alpha _{N-1}}}_{\ast}\dots \pi _{\alpha _1}^{\ast}{\pi _{\alpha _1}}_{\ast} .
\end{equation}

\vspace*{0.2cm}

\end{proposition}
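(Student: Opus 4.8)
The plan is to prove the isomorphism \eqref{eq:WCF_dercat} by iterating a single ``one-step'' identity, exactly mirroring the way the classical Demazure/Weyl character formula is built up from the minimal-parabolic Demazure operators. Concretely, for each intermediate parabolic corresponding to a truncation $s_{\alpha_i}s_{\alpha_{i+1}}\cdots s_{\alpha_N}$ of the reduced word, write $\pi^{(i)}:{\bf G}/{\bf B}\to {\bf G}/{\bf P}^{(i)}$ for the partial flag bundle. The key local statement I would isolate is: if ${\bf P}\subset {\bf P}'$ are standard parabolics with $\dim{\bf P}'/{\bf P}=1$ (so ${\bf G}/{\bf P}\to {\bf G}/{\bf P}'$ is a $\Pp^1$-bundle obtained from a minimal parabolic $\alpha$), and $q:{\bf G}/{\bf P}\to{\bf G}/{\bf P}'$, $q':{\bf G}/{\bf P}'\to {\rm Spec}(\sk)$ are the projections, then the functor $q'^{\ast}q'_{\ast}$ applied after $q^{\ast}q_{\ast}$ reproduces the larger-parabolic operator — i.e. one gets a telescoping product. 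Iterating from $i=N$ down to $i=1$ yields the right-hand side of \eqref{eq:WCF_dercat}, and the top-most step, where the large parabolic is all of ${\bf G}$, gives the left-hand side $\pi^{\ast}\pi_{\ast}$.

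The cleanest way to organize the iteration is via base change on the fiber products. For consecutive parabolics, form the Cartesian square expressing ${\bf G}/{\bf B}\to{\bf G}/{\bf P}_i\to \cdots$, and use flat base change together with the projection formula to commute $\pi_{\alpha_i}^{\ast}{\pi_{\alpha_i}}_{\ast}$ past the lower operators. The essential input is that the composite of the structure maps in the chain ${\bf G}/{\bf B}\to{\bf G}/{\bf P}^{(i)}\to\cdots\to{\rm Spec}(\sk)$ is well-behaved, and that $R\Gamma$ of ${\bf G}/{\bf P}^{(i)}$ computed fiberwise over ${\bf G}/{\bf P}^{(i-1)}$ agrees with the relative pushforward of the $\Pp^1$-bundle. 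This is precisely the derived-category shadow of the statement in \cite{CPS} that the Weyl character is obtained by successively applying Demazure operators attached to a reduced word, independently of the word chosen — here independence of the reduced expression is automatic because both sides of \eqref{eq:WCF_dercat} are intrinsic (the left-hand side manifestly so). I would also invoke Remark \ref{rem:Demazure_bits_over_Z}: over an arbitrary field, each $\Pp^1$-bundle step $\pi_{\alpha_i}^{\ast}{\pi_{\alpha_i}}_{\ast}$ is computed by the Euler sequence on the fibers, so no characteristic-zero hypothesis is needed.

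I expect the main obstacle to be the bookkeeping of the fiber-product diagrams and making sure that at each stage the relevant square is genuinely Cartesian with flat (indeed smooth) vertical maps, so that flat base change applies on the nose at the level of derived functors. A subtle point is that ${\bf G}/{\bf B}\to{\bf G}/{\bf P}^{(i)}$ is itself an iterated $\Pp^1$-bundle, not the $\Pp^1$-bundle $\pi_{\alpha_i}$; one must check that $\pi_{\alpha_i}^{\ast}{\pi_{\alpha_i}}_{\ast}$, a priori a self-functor of $\Dd^b({\bf G}/{\bf B})$, can be ``pushed down'' to act compatibly with the tower. The way to handle this is to factor $\pi_{\alpha_i}$ through the Bott–Samelson-type diagram associated to the reduced word $s_{\alpha_i}\cdots s_{\alpha_N}$ and verify the identity by induction on $N-i$, the base case being a single $\Pp^1$-bundle where the statement is a tautology. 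Once the inductive step is set up correctly, each individual commutation is a routine application of flat base change and the projection formula, so the real content is choosing the right diagrams; that is where I would spend the most care.
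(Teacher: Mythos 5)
Your plan has a structural problem right at the outset: the ``intermediate parabolics'' ${\bf P}^{(i)}$ attached to the truncations $s_{\alpha_i}s_{\alpha_{i+1}}\cdots s_{\alpha_N}$ of the reduced word do not exist. A standard parabolic corresponds to a \emph{subset of simple roots}, and the only Weyl group elements arising as ``longest elements relative to a parabolic'' are the $w_{0,I}$ for $I\subset{\rm S}$; a general truncation of a reduced word for $w_0$ is an arbitrary element $w\in\mathcal W$ and is not of this form. Consequently there is no tower ${\bf G}/{\bf B}\to{\bf G}/{\bf P}^{(N)}\to\cdots\to{\rm Spec}(\sk)$ along which to telescope, and the identity $q'^{\ast}q'_{\ast}\circ q^{\ast}q_{\ast}=(q'q)^{\ast}(q'q)_{\ast}$ you want at each stage is false in general anyway (already for $\Pp^1\times\Pp^1\to\Pp^1\to{\rm pt}$ the composite of the two box operators is not $\pi^{\ast}\pi_{\ast}$ applied in the wrong order; the point is that these operators do not commute past each other for free). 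The correct intermediate objects are not partial flag varieties but Schubert varieties: the composite of the first $k$ operators $\pi_{\alpha_k}^{\ast}{\pi_{\alpha_k}}_{\ast}\cdots\pi_{\alpha_1}^{\ast}{\pi_{\alpha_1}}_{\ast}$ is, by the convolution formula for integral transforms, the transform with kernel $p_{\ast}\Oo_{\mathcal Z_k}$, where $\mathcal Z_k$ is the iterated fiber product ${\bf G}/{\bf B}\times_{{\bf G}/{\bf P}_{\alpha_1}}\cdots\times_{{\bf G}/{\bf P}_{\alpha_k}}{\bf G}/{\bf B}$, i.e.\ the twisted Bott--Samelson variety ${\bf G}\times_{\bf B}Z_{s_{\alpha_1}\cdots s_{\alpha_k}}$, and the inductive claim must be that this pushforward is the structure sheaf of ${\bf G}\times_{\bf B}X_{s_{\alpha_1}\cdots s_{\alpha_k}}$.

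This exposes the second, more serious gap: you assert that each step is ``a routine application of flat base change and the projection formula,'' but the actual content of the proposition is the statement that the Demazure resolution $d:Z_{w_0}\to{\bf G}/{\bf B}$ satisfies $d_{\ast}\Oo_{Z_{w_0}}=\Oo_{{\bf G}/{\bf B}}$ in the derived sense --- i.e.\ ${\rm R}^0d_{\ast}\Oo=\Oo$ (normality of Schubert varieties plus Zariski's main theorem) and ${\rm R}^id_{\ast}\Oo=0$ for $i>0$. In characteristic $p$ this is a genuine theorem (the paper derives it from \cite[Prop.~3.2.1 and Thm.~3.3.4]{BK}, which rest on Frobenius-splitting techniques), not something that falls out of base change in a Cartesian square; indeed the relevant squares $({\bf P}_{\alpha}\times_{\bf B}X_v\to X_{s_{\alpha}v})$ are not flat. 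Your appeal to Remark \ref{rem:Demazure_bits_over_Z} and the Euler sequence on $\Pp^1$-fibers addresses acyclicity of certain line bundles, which is not the input needed here. Finally, the remark that independence of the reduced expression is ``automatic because both sides are intrinsic'' is circular: the left-hand side is intrinsic, but you may not use that to argue about the right-hand side before the identity \eqref{eq:WCF_dercat} is established. To repair the proof you should replace the parabolic tower by the Bott--Samelson/Schubert induction and supply the rational-resolution statement as the key lemma, which is exactly how the paper proceeds.
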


\begin{proof}
Denote $\mathcal Z$ the fibered product ${\bf G}/{\bf B}\times _{{\bf G}/{\bf P}_{\alpha _1}}{\bf G}/{\bf B}\times \dots \times _{{\bf G}/{\bf P}_{\alpha _N}}{\bf G}/{\bf B}$, and let $p:\mathcal Z\rightarrow {\bf G}/{\bf B}\times {\bf G}/{\bf B}$ denote the projection onto the two extreme factors. Then, by Proposition \ref{prop:F-M_composition}, the functor in the right hand side of (\ref{eq:WCF_dercat}) is given by an integral transform whose kernel is isomorphic to the direct image $p_{\ast}\Oo _{\mathcal Z}\in \Dd ^b({\bf G}/{\bf B}\times {\bf G}/{\bf B})$. Observe that $\mathcal Z$ is isomorphic to ${\bf G}\times _{\bf B}Z_{w_0}$, where $Z_{w_0}: = {\bf P}_{\alpha _1}\times \dots \times {\bf P}_{\alpha _N}/{\bf B}^N$ is the Demazure variety corresponding to the reduced expression of $w_0$ as above. Indeed, by definition of these varieties \cite[Definition 2.2.1]{BK} and [Diagram ($\mathcal D$), p.66] of {\it loc.cit.}, one has an isomorphism 
$({\bf G}\times _{\bf B}Z_{w_0s_{\alpha _N}})\times _{{\bf G}/{\bf P}_{\alpha _N}}{\bf G}/{\bf B}= {\bf G}\times _{\bf B}(Z_{w_0s_{\alpha _N}}\times _{{\bf G}/{\bf P}_{\alpha _N}}{\bf G}/{\bf B})={\bf G}\times _{\bf B}Z_{w_0}$.

The projection $p$ maps ${\mathcal Z}={\bf G}\times _{\bf B}Z_{w_0}$ onto ${\bf G}/{\bf B}\times {\bf G}/{\bf B}$. Consider the base change of $p$ along the quotient morphism $q: {\bf G}\times {\bf G}/{\bf B}\rightarrow  {\bf G}/{\bf B}\times  {\bf G}/{\bf B}$: one obtains the projection $p':{\bf G}\times Z_{w_0}\rightarrow {\bf G}\times {\bf G}/{\bf B}$ that factors as ${\rm id}\times d$, where $d: Z_{w_0}\rightarrow {\bf G}/{\bf B}$ is the projection. By Proposition \ref{prop:Demazure_vanishing} below, $p'_{\ast}\Oo _{{\bf G}\times Z_{w_0}}=\Oo _{{\bf G}\times {\bf G}/{\bf B}}$. Since the quotient morphism ${\bf G}\rightarrow {\bf G}/{\bf B}$ is flat, by flat base change one obtains $q^{\ast}p_{\ast}\Oo _{\mathcal Z}=p'_{\ast}\Oo _{{\bf G}\times Z_{w_0}}=\Oo _{{\bf G}\times {\bf G}/{\bf B}}$. Applying $q_{\ast}$ to $q^{\ast}p_{\ast}\Oo _{\mathcal Z}$ and using the projection formula, one arrives at an isomorphism $q_{\ast}(q^{\ast}p_{\ast}\Oo _{\mathcal Z})=p_{\ast}\Oo _{\mathcal Z}\otimes q_{\ast}\Oo _{{\bf G}\times {\bf G}/{\bf B}}=q_{\ast}\Oo _{{\bf G}\times {\bf G}/{\bf B}}$. It follows that $p_{\ast}\Oo _{\mathcal Z}$ is an invertible sheaf on ${\bf G}/{\bf B}\times {\bf G}/{\bf B}$ isomorphic to $\Ll \boxtimes \Oo _{{\bf G}/{\bf B}}$ for some line bundle $\Ll$ on ${\bf G}/{\bf B}$. Applying the integral transform $\Phi _{w_0}=\Phi _{w_0^{-1}}=\Phi _{p_{\ast}\Oo _{\mathcal Z}}$ to $\Oo _{{\bf G}/{\bf B}}$, one obtains $\Phi _{w_0^{-1}}(\Oo _{{\bf G}/{\bf B}})=\pi _{\alpha _1}^{\ast}{\pi _{\alpha _1}}_{\ast}\pi _{\alpha _{2}}^{\ast}{\pi _{\alpha _{2}}}_{\ast}\dots \pi _{\alpha _N}^{\ast}{\pi _{\alpha _N}}_{\ast}(\Oo _{{\bf G}/{\bf B}}) = 
\Oo _{{\bf G}/{\bf B}}=\Phi _{\Oo _{{\bf G}/{\bf B}}\boxtimes \Ll}(\Oo _{{\bf G}/{\bf B}})=\pi _{\ast}\Oo _{\bf G/B}\otimes 
\Ll ={\mathbb H}^{\cdot}({\bf G/B},\Oo _{\bf G/B})\otimes \Ll=\Ll$, where the last isomorphism follows from Corollary \ref{cor:admisscor} below. Therefore, $p_{\ast}\Oo _{\mathcal Z} = \Oo _{{\bf G}/{\bf B}\times {\bf G}/{\bf B}}$. Finally, by flat base change for the morphism $\pi : {\bf G}/{\bf B}\rightarrow {\rm Spec}(\sk)$ along itself, the integral transform $\Phi _{\Oo _{{\bf G}/{\bf B}\times {\bf G}/{\bf B}}}$ is isomorphic to $\pi ^{\ast}\pi _{\ast}$.
\end{proof}

\begin{proposition}\label{prop:Demazure_vanishing}
Let $d: Z_{w_0}\rightarrow {\bf G}/{\bf B}$ be the projection map as above. Then $d_{\ast}\Oo _{Z_{w_0}}=\Oo _{{\bf G}/{\bf B}}$.
\end{proposition}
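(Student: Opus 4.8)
The plan is to prove the statement by induction on the length $N$ of the reduced word, using the tower structure of the Demazure variety. Recall that $Z_{w_0} = {\bf P}_{\alpha_1}\times_{\bf B}\cdots \times_{\bf B}{\bf P}_{\alpha_N}/{\bf B}$ fibers over $Z_{w_0 s_{\alpha_N}}$ (equivalently, the quotient obtained by dropping the last factor) with $\Pp^1$--fibers; write $f\colon Z_{w_0}\to Z_{w_0 s_{\alpha_N}}$ for this $\Pp^1$--bundle and $d'\colon Z_{w_0 s_{\alpha_N}}\to {\bf G}/{\bf B}$ for the projection of the shorter Demazure variety, so that $d = d'\circ f$. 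The base case $N=0$ is the identity ${\bf G}/{\bf B}\to {\bf G}/{\bf B}$, where the claim is trivial.

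For the inductive step, I would first compute $f_{\ast}\Oo_{Z_{w_0}} = \Oo_{Z_{w_0 s_{\alpha_N}}}$. This is the standard fact that the pushforward of the structure sheaf along a $\Pp^1$--bundle $\Pp(\Ee)\to Y$ is $\Oo_Y$: from Theorem \ref{th:Orvlovth} (or directly from the relative Euler sequence and cohomology of $\Oo(d)$ on $\Pp^1$, valid over an arbitrary base, hence over $\mathbb Z$ or over $\sk$) one has $R^0 f_{\ast}\Oo = \Oo_Y$ and $R^i f_{\ast}\Oo = 0$ for $i>0$, so $f_{\ast}\Oo_{Z_{w_0}}\cong \Oo_{Z_{w_0 s_{\alpha_N}}}$ in the derived category. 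Then, by the projection formula / functoriality of derived pushforward,
\begin{equation}
d_{\ast}\Oo_{Z_{w_0}} = d'_{\ast}f_{\ast}\Oo_{Z_{w_0}} = d'_{\ast}\Oo_{Z_{w_0 s_{\alpha_N}}} = \Oo_{{\bf G}/{\bf B}},
\end{equation}
where the last equality is the inductive hypothesis applied to the reduced expression $s_{\alpha_1}\cdots s_{\alpha_{N-1}}$ of $w_0 s_{\alpha_N}$. This completes the induction.

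The one point that needs care — and which I expect to be the main (though mild) obstacle — is checking that the $\Pp^1$--bundle structure $f$ is genuinely what I claimed, i.e. that forgetting the last ${\bf P}_{\alpha_N}$--factor realizes $Z_{w_0}$ as a $\Pp^1$--bundle over $Z_{w_0 s_{\alpha_N}}$ (this uses ${\bf P}_{\alpha_N}/{\bf B}\cong \Pp^1$ and the Demazure tower construction, cf. \cite[Definition 2.2.1]{BK}), together with the assertion that $f_{\ast}\Oo = \Oo$ holds before passing to any field, so that there is no characteristic obstruction and no higher direct images to worry about. Everything else is a formal consequence of $Rd_{\ast}\Oo = Rd'_{\ast}Rf_{\ast}\Oo$ and the cohomology of the line bundle $\Oo_{\Pp^1}$. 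Alternatively, one can avoid the induction entirely and argue in one stroke: the composite $d$ is a proper morphism with geometrically connected fibers between smooth varieties, and the Demazure resolution $Z_{w_0}\to {\bf G}/{\bf B}$ (for $w = w_0$, so the image is all of ${\bf G}/{\bf B}$) is known to have rational fibers with $R^{>0}d_{\ast}\Oo = 0$ by the iterated $\Pp^1$--bundle structure; but the inductive argument above is cleaner and self--contained.
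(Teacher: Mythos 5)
The inductive setup fails at its very first step: the Demazure (multiplication) map $d$ does \emph{not} factor as $d'\circ f$, where $f\colon Z_{w_0}\to Z_{w_0 s_{\alpha_N}}$ forgets the last factor and $d'$ is the multiplication map of the shorter word. On points, $d(p_1,\dots,p_N)=p_1\cdots p_N{\bf B}$ while $d'(f(p_1,\dots,p_N))=p_1\cdots p_{N-1}{\bf B}$; these agree only after further projecting to ${\bf G}/{\bf P}_{\alpha_N}$. A quick sanity check: the image of $d'\circ f$ is the Schubert variety $X_{w_0 s_{\alpha_N}}$, a proper subvariety of ${\bf G}/{\bf B}$ of dimension $N-1$, whereas $d$ is surjective --- indeed birational, since $\dim Z_{w_0}=N=\dim {\bf G}/{\bf B}$; a generically finite morphism cannot factor through a $\Pp^1$--bundle onto a lower--dimensional variety. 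So the identity $d_{\ast}\Oo_{Z_{w_0}}=d'_{\ast}f_{\ast}\Oo_{Z_{w_0}}$ on which your whole induction rests is false. The closing ``one--stroke'' alternative has the same defect: $d$ is not an iterated $\Pp^1$--bundle (it is a birational contraction), so ``$R^{>0}d_{\ast}\Oo=0$ by the iterated $\Pp^1$--bundle structure'' does not follow; in characteristic $p$ that vanishing is precisely the nontrivial content of the proposition.

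The correct relation between the two maps is that $Z_{w_0}$ is the \emph{fibred product} $Z_{w_0 s_{\alpha_N}}\times_{{\bf G}/{\bf P}_{\alpha_N}}{\bf G}/{\bf B}$, with $f$ and $d$ the two projections (this is the identification used in the proof of Proposition \ref{prop:derverDemazure_char_for}, cf.\ \cite[Diagram $(\mathcal D)$]{BK}). One can try to repair your induction via flat base change along $\pi_{\alpha_N}$, giving $d_{\ast}\Oo_{Z_{w_0}}=\pi_{\alpha_N}^{\ast}{\pi_{\alpha_N}}_{\ast}\,d'_{\ast}\Oo_{Z_{w_0 s_{\alpha_N}}}$; but then the inductive hypothesis must concern $d'_{\ast}\Oo_{Z_{v}}$ for the shorter word, whose target is the (generally singular) Schubert variety $X_{v}$, and one additionally needs ${\pi_{\alpha_N}}_{\ast}\Oo_{X_{v}}=\Oo_{\pi_{\alpha_N}(X_{v})}$ --- a nontrivial fact about Schubert varieties that does not come from the cohomology of $\Oo_{\Pp^1}$ alone. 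This is essentially the route the paper takes, in a different order: ${\rm R}^0d_{\ast}\Oo=\Oo$ follows from birationality, normality and Zariski's main theorem, while ${\rm R}^{>0}d_{\ast}\Oo=0$ is proved by induction on $l(w)$, peeling off the \emph{first} factor and invoking \cite[Proposition 3.2.1(b)]{BK} for the product morphism ${\bf P}_{\alpha_1}\times_{\bf B}X_v\to X_w$ --- the genuinely hard input that your argument tries to bypass.
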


\begin{proof}

Given an element $w\in \mathcal W$, denote the corresponding Schubert variety by $X_w$, and let $d_w: Z_w= {\bf P}_{\alpha _1}\times \dots \times {\bf P}_{\alpha _n}/{\bf B}^n\rightarrow X_w$ denote the Demazure desingularization. Then $d=d_{w_0}: Z_{w_0}\rightarrow {\bf G}/{\bf B}$ is a birational morphism onto ${\bf G}/{\bf B}$, since $X_{w_0}={\bf G}/{\bf B}$. The flag variety being smooth, hence normal, by Zariski's main theorem one has ${\rm R}^0d_{\ast}\Oo _{Z_{w_0}}=\Oo _{{\bf G}/{\bf B}}$. To prove the vanishing of higher direct images ${\rm R}^id_{\ast}\Oo _{Z_{w_0}}$ for $i>0$, one can argue as in the \cite[Theorem 3.3.4, (b)]{BK}. More specifically,
one argues by induction on the length $l(w)$ of an element $w\in \mathcal W$ to prove that ${\rm R}^i{d_w}_{\ast}\Oo _{Z_w}=0$ for $i\geq 1$; if $l(w)=1$ then 
$d_w$ is an isomorphism. Given a reduced expression $s_{\alpha _1}\dots s_{\alpha _n}$ of $w$, set $v=s_{\alpha _2}\dots s_{\alpha _n}$, and 
consider the factorization of $d_w$ as $d _{\alpha _1v}: Z_w={\bf P}_{\alpha _1}\times _{\bf B}Z_{v}\rightarrow {\bf P}_{\alpha _1}\times _{\bf B}X_{v}$ followed by the product morphism $f: {\bf P}_{\alpha _1}\times _{\bf B}X_{v}\rightarrow X_{w}$. Then, by induction one obtains ${\rm R}^i{d_v}_{\ast}\Oo _{Z_v}=0$ for $i\geq 1$ which implies ${{\rm R}^id _{\alpha _1v}}_{\ast}\Oo _{Z_w}=0$ for $i\geq 1$ as well. Finally, \cite[Proposition 3.2.1, (b)]{BK} implies that for the product morphism $f: {\bf P}\times _{\bf B}X_v\rightarrow {\bf P}X_v$, where $\bf P$ is the minimal parabolic subgroup corresponding to a simple reflection,
the higher direct images ${\rm R}^if_{\ast}\Oo _{{\bf P}\times _{\bf B}X_v}$ are trivial for $i\geq 1$. Hence, for the composed morphism $d_w$ the higher direct images ${\rm R}^i{d _{w}}_{\ast}\Oo _{Z_w}=0$ are trivial for $i\geq 1$.
\end{proof}

\begin{corollary}\label{cor:admisscor}
One has ${\rm H}^i({\bf G/B},\Oo _{\bf G/B})=0$ for $i>0$. Given a line bundle $\Ll$ on ${\bf G}/{\bf B}$, the triangulated subcategory $\langle \Ll\rangle$ of $\Dd ^b(\bf G/B)$ generated by $\Ll$ is admissible.
\end{corollary}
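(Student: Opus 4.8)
The plan is to prove Corollary \ref{cor:admisscor} in two stages. First, the cohomology vanishing ${\rm H}^i({\bf G/B},\Oo_{\bf G/B})=0$ for $i>0$ follows immediately from Proposition \ref{prop:Demazure_vanishing}: since $d_\ast\Oo_{Z_{w_0}}=\Oo_{\bf G/B}$ (as a complex concentrated in degree zero, all higher direct images vanishing), the Leray spectral sequence for the composite $Z_{w_0}\to{\bf G/B}\to{\rm Spec}(\sk)$ degenerates to give ${\rm H}^i({\bf G/B},\Oo_{\bf G/B})={\rm H}^i(Z_{w_0},\Oo_{Z_{w_0}})$. The Demazure variety $Z_{w_0}$ is an iterated $\Pp^1$-bundle, so by repeated application of the projection formula and the fact that $\Pp^1$-bundles have $\Oo$-cohomology only in degree zero (or directly, $Z_{w_0}$ being a tower of $\Pp^1$-fibrations starting from a point), one gets ${\rm H}^i(Z_{w_0},\Oo_{Z_{w_0}})=\sk$ for $i=0$ and $0$ otherwise. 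Alternatively, and perhaps more cleanly, one can argue directly on ${\bf G/B}$ by induction along the fibrations $\pi_{\alpha_i}$, using that each is a $\Pp^1$-bundle and $R\pi_{\alpha_i\ast}\Oo=\Oo$.

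Second, for the admissibility claim: a line bundle $\Ll$ on ${\bf G/B}$ is an exceptional object in $\Dd^b({\bf G/B})$ precisely because $\Hom^\bullet_{\Dd^b({\bf G/B})}(\Ll,\Ll)=\Hom^\bullet(\Oo_{\bf G/B},\Oo_{\bf G/B})={\rm H}^\bullet({\bf G/B},\Oo_{\bf G/B})=\sk$, which is exactly the vanishing just established (the graded algebra structure is the trivial one). Then the triangulated subcategory $\langle\Ll\rangle$ generated by a single exceptional object is admissible by the cited result \cite[Lemma 1.58]{Huyb} (equivalently the discussion following Definition \ref{def:exceptcollection}), since the subcategory generated by an exceptional collection is admissible. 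I would spell this out: $\langle\Ll\rangle$ is equivalent to $\Dd^b(\sk\amod)$ via $V^\bullet\mapsto\Ll\otimes_\sk V^\bullet$, and both adjoints to the inclusion are given by $\RHom^\bullet_{\Dd^b({\bf G/B})}(\Ll,-)$ and its left-adjoint analogue, using Serre duality on the smooth projective variety ${\bf G/B}$ to produce the left adjoint.

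The main obstacle is really just the first stage — establishing ${\rm H}^{>0}({\bf G/B},\Oo)=0$ over an arbitrary base (in particular in characteristic $p$ and over $\mathbb Z$), where Bott's theorem is unavailable. But this is precisely what Proposition \ref{prop:Demazure_vanishing} was set up to deliver, so the work has already been done; the corollary is then a formal consequence. One small point to be careful about: the equivalence $\langle\Ll\rangle\simeq\Dd^b(\sk)$ and the admissibility require knowing $\Ll$ is exceptional, so the logical order matters — cohomology vanishing first, exceptionality second, admissibility third. I do not anticipate any genuine difficulty beyond assembling these pieces in the right order.
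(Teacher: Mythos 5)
Your proposal is correct and follows essentially the same route as the paper: the vanishing ${\rm H}^{>0}({\bf G/B},\Oo_{\bf G/B})=0$ is deduced from Proposition \ref{prop:Demazure_vanishing} together with the fact that the Demazure variety $Z_{w_0}$ is an iterated $\Pp^1$--bundle over a point, and admissibility of $\langle\Ll\rangle$ then follows from exceptionality of $\Ll$ via \cite[Lemma 1.58]{Huyb}. Your remark on the logical ordering (vanishing, then exceptionality, then admissibility) matches the paper's argument exactly.
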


\begin{proof}
By Proposition \ref{prop:Demazure_vanishing}, one has $d_{\ast}\Oo _{X_{w_0}}=\Oo _{\bf G/B}$. On the other hand, by its construction, the variety $X_{w_0}$ is isomorphic to an iterated sequence of $\Pp ^1$--bundles over a point; hence, ${\rm H}^i(X_{w_0},\Oo _{X_{w_0}})=0$ for $i>0$. Therefore, ${\rm  H}^i({\bf G/B},\Oo _{\bf G/B})={\rm  H}^i({\bf G/B},d_{\ast}\Oo _{\mathcal  X})={\rm H}^i(X_{w_0},\Oo _{X_{w_0}})=0$ for $i>0$.
It follows from Section \ref{subsec:dercatcohsheaves} that the category $\langle \Ll\rangle$ is admissible once the bundle $\Ll$ is exceptional, i.e. $\Hom _{\bf G/B}^{\bullet}(\Ll ,\Ll)=k$. The latter condition is equivalent to ${\rm H}^i({\bf G/B},\Oo _{\bf G/B})=0$ for $i>0$.
\end{proof}

\begin{lemma}\label{lem:mainlemma}
Consider the semiorthogonal decomposition of $\Dd ^b(\bf G/B) = \langle \langle \Oo _{{\bf G}/{\bf B}}\rangle ^{\perp},\langle  \Oo _{{\bf G}/{\bf B}}\rangle \rangle$. Then the subcategory $\langle \Oo _{{\bf G}/{\bf B}}\rangle^{\perp}\subset \Dd ^b(\bf G/B)$ is generated, as an admissible triangulated subcategory of $\Dd ^b({\bf G}/{\bf B})$, by acyclic line bundles $\Ll _{\chi}$ with the following property: there exists a simple coroot $\alpha ^{\vee}\in {\rm R}^{\vee}$, such that $\langle \chi + \rho, \alpha ^{\vee}\rangle =0$.
\end{lemma}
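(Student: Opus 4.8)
The plan is to describe the subcategory $\langle \Oo_{\mathbf{G}/\mathbf{B}}\rangle^{\perp}$ explicitly by combining the iterated $\mathbb{P}^1$-bundle structure of $\mathbf{G}/\mathbf{B}$ with Proposition~\ref{prop:derverDemazure_char_for}. First I would fix a reduced expression $w_0 = s_{\alpha_1}s_{\alpha_2}\cdots s_{\alpha_N}$ and unwind the functorial identity (\ref{eq:WCF_dercat}): since $\pi^{\ast}\pi_{\ast}$ is the integral transform with kernel $\Oo_{\mathbf{G}/\mathbf{B}}\boxtimes \Oo_{\mathbf{G}/\mathbf{B}}$, applying it to a bounded complex $\Ff$ computes $\RR\Gamma(\mathbf{G}/\mathbf{B},\Ff)\otimes_{\sk}\Oo_{\mathbf{G}/\mathbf{B}}$ (here one uses Corollary~\ref{cor:admisscor} to know ${\rm H}^{>0}(\mathbf{G}/\mathbf{B},\Oo)=0$, so the unit of the adjunction identifies $\langle\Oo_{\mathbf{G}/\mathbf{B}}\rangle$ with $\Dd^b(\sk)$). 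Consequently $\Ff \in \langle \Oo_{\mathbf{G}/\mathbf{B}}\rangle^{\perp}$ if and only if $\pi_{\ast}\Ff = 0$, i.e. $\Ff$ is \emph{acyclic}. So the lemma is reduced to: the thick subcategory generated by the acyclic line bundles $\Ll_{\chi}$ with $\langle \chi+\rho,\alpha^{\vee}\rangle = 0$ for some simple coroot $\alpha^{\vee}$ equals the thick subcategory of \emph{all} acyclic objects.

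One inclusion is immediate: by Remark~\ref{rem:Demazure_bits_over_Z} (the Demazure lemma valid in all characteristics), any such $\Ll_{\chi}$ is acyclic, hence lies in $\langle\Oo_{\mathbf{G}/\mathbf{B}}\rangle^{\perp}$. For the reverse inclusion I would argue by induction on $N = \dim \mathbf{G}/\mathbf{B}$ using the tower of $\mathbb{P}^1$-bundles $\pi_{\alpha_i}\colon \mathbf{G}/\mathbf{B}\to \mathbf{G}/\mathbf{P}_{\alpha_i}$, or more precisely the Demazure resolution $d\colon Z_{w_0}\to \mathbf{G}/\mathbf{B}$ which is itself an iterated $\mathbb{P}^1$-bundle over $\mathrm{Spec}(\sk)$. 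Apply Theorem~\ref{th:Orvlovth} to each $\mathbb{P}^1$-bundle step: $\Dd^b(\mathbf{G}/\mathbf{B}) = \langle \pi_{\alpha_i}^{\ast}\Dd^b(\mathbf{G}/\mathbf{P}_{\alpha_i})\otimes \Oo_{\pi_{\alpha_i}}(-1),\ \pi_{\alpha_i}^{\ast}\Dd^b(\mathbf{G}/\mathbf{P}_{\alpha_i})\rangle$. The first semiorthogonal block, twisted by the relative $\Oo(-1)$, consists precisely of objects that are acyclic along the fibres of $\pi_{\alpha_i}$; so a line bundle $\Ll_{\chi}$ with $\langle\chi+\rho,\alpha_i^{\vee}\rangle=0$ sits in it, as these are exactly the weights with $\chi|_{\text{fibre}} = \Oo_{\mathbb{P}^1}(-1)$. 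Given an arbitrary acyclic object, decompose it along $\pi_{\alpha_N}$; the ${\pi_{\alpha_N}}^{\ast}\Dd^b(\mathbf{G}/\mathbf{P}_{\alpha_N})$-component pushes down to something acyclic on $\mathbf{G}/\mathbf{P}_{\alpha_N}$, and by induction (the parabolic flag $\mathbf{G}/\mathbf{P}_{\alpha_N}$ having its own tower) this component lies in the category generated by the required line bundles pulled back; the twisted component is fibrewise-acyclic and is handled by the relative Orlov block, again reducing to $\mathbf{G}/\mathbf{P}_{\alpha_N}$ via $\otimes\,\Oo_{\pi_{\alpha_N}}(-1)$ and descent. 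One then checks that pulling back an acyclic line bundle $\Ll_{\mu}$ from $\mathbf{G}/\mathbf{P}_{\alpha_N}$ and twisting yields an acyclic line bundle on $\mathbf{G}/\mathbf{B}$ of the stated form, and that the ${\pi_{\alpha_N}}^{\ast}(-)$ of an acyclic object on $\mathbf{G}/\mathbf{P}_{\alpha_N}$ is again acyclic.

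The main obstacle I anticipate is the inductive step itself: one must show that an \emph{arbitrary} acyclic object of $\Dd^b(\mathbf{G}/\mathbf{B})$ — not just a line bundle — decomposes into pieces each of which is generated by the prescribed line bundles. The subtlety is that $\langle\Oo_{\mathbf{G}/\mathbf{B}}\rangle^{\perp}$ is defined as a \emph{triangulated} (thick) subcategory, so it suffices to produce a \emph{classical generating set} inside it and verify that set generates everything acyclic; the clean way is to note that $\Dd^b(\mathbf{G}/\mathbf{B})$ admits a full strong exceptional collection of line bundles (obtained by iterating Theorem~\ref{th:Orvlovth} down the Demazure tower), and that in the induced semiorthogonal decomposition $\langle\Oo_{\mathbf{G}/\mathbf{B}}\rangle^{\perp}$ is generated by precisely those members of the collection other than $\Oo_{\mathbf{G}/\mathbf{B}}$ — and each of those can be chosen, by a judicious choice of which $\Oo_{\pi}(-1)$-twist to take at each stage, to be one of the acyclic $\Ll_{\chi}$ with $\langle\chi+\rho,\alpha^{\vee}\rangle=0$. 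Verifying that such a choice is always available, and that it is compatible with the parabolic inductive hypothesis, is the technical heart of the argument; the rest is bookkeeping with adjunctions and the projection formula.
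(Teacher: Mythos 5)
Your reduction of the lemma to the statement that the acyclic line bundles $\Ll_{\chi}$ with $\langle\chi+\rho,\alpha^{\vee}\rangle=0$ generate the full subcategory of acyclic objects is correct, and so is the easy inclusion via Remark \ref{rem:Demazure_bits_over_Z}. But your proof of the hard inclusion rests on a structural premise that is false: ${\bf G}/{\bf B}$ is \emph{not} an iterated $\Pp^1$-bundle over a point, and neither are the quotients ${\bf G}/{\bf P}_{\alpha}$. The morphisms $\pi_{\alpha_i}\colon {\bf G}/{\bf B}\to{\bf G}/{\bf P}_{\alpha_i}$ all have the \emph{same} source and pairwise incomparable targets; they do not form a tower, and ${\bf G}/{\bf P}_{\alpha_N}$ is a partial flag variety of the same group carrying no $\Pp^1$-fibrations indexed by the remaining simple roots, so the inductive step ``the parabolic flag ${\bf G}/{\bf P}_{\alpha_N}$ having its own tower'' has nothing to stand on. (Already for ${\rm SL}_3$ the base $\Pp^2$ of the $\Pp^1$-bundle ${\bf G}/{\bf B}\to{\bf G}/{\bf P}_{\alpha}$ is not a $\Pp^1$-bundle over anything.) The variety that \emph{is} an iterated $\Pp^1$-bundle is the Bott--Samelson/Demazure variety $Z_{w_0}$, which is only birational to ${\bf G}/{\bf B}$; iterating Theorem \ref{th:Orvlovth} there yields a full exceptional collection of line bundles on $Z_{w_0}$, not on ${\bf G}/{\bf B}$. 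Thus the ``full strong exceptional collection of line bundles on $\Dd^b({\bf G}/{\bf B})$ obtained by iterating Theorem \ref{th:Orvlovth} down the Demazure tower'' is precisely the assertion you would need to prove; in general the existence of such collections on ${\bf G}/{\bf B}$ is an open problem and does not follow from anything quoted in the paper. You flag this step yourself as ``the technical heart of the argument,'' and it is exactly the part that is missing.

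The idea that closes this gap in the paper is Proposition \ref{prop:derverDemazure_char_for}: for a reduced expression $w_0=s_{\alpha_1}\cdots s_{\alpha_N}$ one has $\pi^{\ast}\pi_{\ast}=\pi_{\alpha_N}^{\ast}{\pi_{\alpha_N}}_{\ast}\cdots\pi_{\alpha_1}^{\ast}{\pi_{\alpha_1}}_{\ast}$, a derived Demazure character formula that substitutes for the nonexistent tower. Concretely, the paper takes $\mathcal C$ to be the subcategory generated by the objects $\pi_{\alpha}^{\ast}{\pi_{\alpha}}_{\ast}\Ff\otimes\Ll_{-\rho}$ (identified, via Proposition \ref{prop:ample_line_bundle_gen_set}, with the subcategory generated by your line bundles), shows by Serre duality that any $\mathcal G\in{}^{\perp}\mathcal C$ satisfies ${\pi_{\alpha}}_{\ast}(\mathcal G\otimes\Ll_{-\rho})=0$, hence by Theorem \ref{th:Orvlovth} satisfies $\mathcal G=\pi_{\alpha}^{\ast}{\pi_{\alpha}}_{\ast}\mathcal G$ for every simple root $\alpha$, and then applies the composition identity to get $\mathcal G=\pi^{\ast}\pi_{\ast}\mathcal G\in\langle\Oo_{{\bf G}/{\bf B}}\rangle$; Lemma \ref{lem:admiss_orthogonal} then converts this computation of the left orthogonal into the desired generation statement. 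If you wish to keep your acyclicity formulation, you should replace the induction on a tower by this composition identity and the left-orthogonality argument.
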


\begin{remark}
{\rm The generating set of the subcategory $\langle \Oo _{{\bf G}/{\bf B}}\rangle ^{\perp}$ in Lemma \ref{lem:mainlemma} is not at all minimal.
}
\end{remark}

\begin{proof}
By Corollary \ref{cor:admisscor}, the category $\langle  \Oo _{{\bf G}/{\bf B}}\rangle$ is admissible, hence its right orthogonal is an admissible subcategory of $\Dd ^b(\bf G/B)$.\\

Given a simple root $\alpha$, consider the corresponding minimal parabolic subgroup ${\bf P}_{\alpha}$, and let $\pi _{\alpha}:{\bf G}/{\bf B}\rightarrow {\bf G}/{\bf P}_{\alpha}$ denote the projection.
Observe first that $\langle \Oo _{{\bf G}/{\bf B}}\rangle ^{\perp}$ contains the subcategory generated by $\pi _{\alpha}^{\ast}{\pi _{\alpha}}_{\ast}\Ff \otimes \Ll _{-\rho}$, where $\Ff \in \Dd ^b({\bf G}/{\bf B})$. Indeed, 

\begin{equation}
\Hom  _{{\bf G}/{\bf B}}^{\bullet}(\Oo _{{\bf G}/{\bf B}},\pi _{\alpha}^{\ast}{\pi _{\alpha}}_{\ast}\Ff \otimes \Ll _{-\rho})  = {\mathbb H}^{\ast}({\bf G}/{\bf P}_{\alpha},{\pi _{\alpha}}_{\ast}\Ff\otimes {\pi _{\alpha}}_{\ast}\Ll _{-\rho}) =0,
\end{equation}

\vspace{0.2cm}

as ${\pi _{\alpha}}_{\ast}\Ll _{-\rho}=0$. Let ${\mathcal C}\subset \Dd ^b({\bf G}/{\bf B})$ be the full triangulated category generated by $\pi _{\alpha}^{\ast}{\pi _{\alpha}}_{\ast}\Ff \otimes \Ll _{-\rho}$, where $\Ff \in \Dd ^b({\bf G}/{\bf P}_{\alpha})$ and $\alpha$ runs over the set of all the simple roots.
Observe next that ${\mathcal C}$ coincides with the triangulated subcategory generated by line bundles satisfying the condition of Lemma \ref{lem:mainlemma}: one the one hand, given a simple root $\alpha$, any line bundle $\Ll _{\chi}$ on ${\bf G}/{\bf P}_{\alpha}$ satisfies $\langle \chi,\alpha ^{\vee}\rangle =0$, and the projection functor ${\pi _{\alpha}}_{\ast}:\Dd ^b({\bf G}/{\bf B})\rightarrow 
\Dd ^b({\bf G}/{\bf P}_{\alpha})$ is surjective. Hence, ${\mathcal C}\supset \langle \Ll _{\chi}\rangle$ with $\langle \chi + \rho, \alpha ^{\vee}\rangle =0$ for a simple coroot $\alpha ^{\vee}$. On the other hand, upon choosing an ample line bundle $\Ll$ on ${\bf G}/{\bf P}_{\alpha}$, the category $\Dd ^b({\bf G}/{\bf P}_{\alpha})$ is generated by the set $\langle \Ll ^{\otimes m}[n]\rangle, m,n\in \mathbb Z$ in virtue of Proposition \ref{prop:ample_line_bundle_gen_set}, and one obtains a converse inclusion ${\mathcal C}\subset \langle \Ll _{\chi}\rangle$ with
$\chi$ as in the statement of the lemma.\\

Consider its left orthogonal $^{\perp}{\mathcal C}\subset \Dd ^b({\bf G}/{\bf B})$.
By Lemma \ref{lem:admiss_orthogonal}, it is an admissible subcategory of $\Dd ^b({\bf G}/{\bf B})$. The same lemma implies that the statement of Lemma \ref{lem:mainlemma} is equivalent to saying that the category $^{\perp}{\mathcal C}$ is equivalent to $\langle \Oo _{{\bf G}/{\bf B}}\rangle$. To this end, observe that any object $\mathcal G$ of $^{\perp}{\mathcal C}$ 
belongs to $\pi _{\alpha}^{\ast}\Dd ^b({\bf G}/{\bf P}_{\alpha})$ for each simple root $\alpha$; in other words, $^{\perp}{\mathcal C}\subset \bigcap _{\alpha}\pi _{\alpha}^{\ast}\Dd ^b({\bf G}/{\bf P}_{\alpha})$. Indeed, by Serre duality

\vspace{0.2cm}

\begin{equation}
\Hom  _{{\bf G}/{\bf B}}^{\bullet}(\mathcal G,\pi _{\alpha}^{\ast}{\pi _{\alpha}}_{\ast}\Ff \otimes \Ll _{-\rho}) = \Hom  _{{\bf G}/{\bf B}}^{\bullet}({\pi _{\alpha}}_{\ast}\Ff,{\pi _{\alpha}}_{\ast} (\mathcal G\otimes \Ll _{-\rho})[{\rm dim}({\bf G}/{\bf B}])^{\ast}=0;
\end{equation}

\vspace{0.2cm}

since $\Ff$ is arbitrary and functor ${\pi _{\alpha}}_{\ast}$ is surjective, it follows that ${\pi _{\alpha}}_{\ast} (\mathcal G\otimes \Ll _{-\rho})=0$. By Theorem \ref{th:Orvlovth}, this implies $\mathcal G\in \pi _{\alpha }^{\ast}\Dd ^b({\bf G}/{\bf P}_{\alpha})$, the line bundle $\Ll _{-\rho}$ having degree $-1$ along $\pi _{\alpha}$.

Let $\sf M$ be an object $^{\perp}{\mathcal C}$. By the above, $\sf M = \pi ^{\ast}_{\alpha}{\pi _{\alpha}}_{\ast}\sf M$ for any simple root $\alpha$. By Proposition 
\ref{prop:derverDemazure_char_for}, one has an isomorphism 
$\pi ^{\ast}\pi _{\ast}{\sf M} = \pi _{\alpha _N}^{\ast}{\pi _{\alpha _N}}_{\ast}\pi _{\alpha _{N-1}}^{\ast}{\pi _{\alpha _{N-1}}}_{\ast}\dots \pi _{\alpha _1}^{\ast}{\pi _{\alpha _1}}_{\ast}{\sf M}={\sf M}$, hence ${\sf M}\in \langle \Oo _{{\bf G}/{\bf B}}\rangle$.
\end{proof}


\begin{corollary}\label{cor:maincorollary}
Consider the semiorthogonal decomposition of $\Dd ^b(\bf G/B) = \langle \langle \Ll _{-\rho}\rangle ,^{\perp} \langle \Ll _{-\rho}\rangle \rangle$. Then the category $^{\perp} \langle \Ll _{-\rho}\rangle$ is generated, as an admissible triangulated subcategory of $\Dd ^b({\bf G}/{\bf B})$, by the set of line bundles $\Ll _{\chi}$, 
where $\langle \chi,\alpha ^{\vee}\rangle =0$ for some simple coroot $\alpha ^{\vee}\in {\rm R}^{\vee}$. 
\end{corollary}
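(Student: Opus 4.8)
The plan is to deduce Corollary~\ref{cor:maincorollary} from Lemma~\ref{lem:mainlemma} by twisting the whole situation by the line bundle $\Ll_{-\rho}$. First I would observe that tensoring by an invertible sheaf is an autoequivalence of $\Dd^b({\bf G}/{\bf B})$, hence sends semiorthogonal decompositions to semiorthogonal decompositions and admissible subcategories to admissible subcategories, and commutes with passing to left orthogonals. Applying $(-)\otimes\Ll_{-\rho}$ to the decomposition $\Dd^b({\bf G}/{\bf B})=\langle\langle\Oo_{{\bf G}/{\bf B}}\rangle^{\perp},\langle\Oo_{{\bf G}/{\bf B}}\rangle\rangle$ of Lemma~\ref{lem:mainlemma}, and using that $\langle\Oo_{{\bf G}/{\bf B}}\rangle\otimes\Ll_{-\rho}=\langle\Ll_{-\rho}\rangle$ (tensoring a one-object generated subcategory by a line bundle generates the twisted object), one gets $\Dd^b({\bf G}/{\bf B})=\langle\langle\Oo_{{\bf G}/{\bf B}}\rangle^{\perp}\otimes\Ll_{-\rho},\langle\Ll_{-\rho}\rangle\rangle$. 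Comparing with the stated decomposition $\langle\langle\Ll_{-\rho}\rangle,{}^{\perp}\langle\Ll_{-\rho}\rangle\rangle$ (note the order of the two pieces is swapped in the Corollary's notation, but the content is the same: the piece orthogonal to $\langle\Ll_{-\rho}\rangle$ is the other summand), one identifies ${}^{\perp}\langle\Ll_{-\rho}\rangle=\langle\Oo_{{\bf G}/{\bf B}}\rangle^{\perp}\otimes\Ll_{-\rho}$.

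Next I would transport the generating set. Lemma~\ref{lem:mainlemma} says $\langle\Oo_{{\bf G}/{\bf B}}\rangle^{\perp}$ is generated, as an admissible triangulated subcategory, by the acyclic line bundles $\Ll_{\chi}$ with $\langle\chi+\rho,\alpha^{\vee}\rangle=0$ for some simple coroot $\alpha^{\vee}$. Tensoring by $\Ll_{-\rho}$ sends $\Ll_{\chi}$ to $\Ll_{\chi-\rho}$, and the functor $(-)\otimes\Ll_{-\rho}$ carries a generating set of a triangulated subcategory to a generating set of its image (it is an equivalence of categories, hence preserves "the smallest thick/admissible subcategory containing a given set"). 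Therefore $\langle\Oo_{{\bf G}/{\bf B}}\rangle^{\perp}\otimes\Ll_{-\rho}={}^{\perp}\langle\Ll_{-\rho}\rangle$ is generated by the line bundles $\Ll_{\chi-\rho}$ with $\langle\chi+\rho,\alpha^{\vee}\rangle=0$. Setting $\psi=\chi-\rho$, the condition becomes $\langle\psi+2\rho,\alpha^{\vee}\rangle=0$; but I should double-check the indexing convention here, because what is actually wanted is the clean statement $\langle\psi,\alpha^{\vee}\rangle=0$. Re-reading Lemma~\ref{lem:mainlemma}, the natural reparametrisation is $\psi=\chi$, i.e. the generators of $\langle\Oo_{{\bf G}/{\bf B}}\rangle^{\perp}$ should be recorded as $\Ll_{\chi-\rho}$ with $\langle\chi,\alpha^{\vee}\rangle=0$ (equivalently ${\pi_{\alpha}}_{\ast}\Ll_{\chi}=\Ll_{\chi}$ on ${\bf G}/{\bf P}_{\alpha}$, twisted down by $\Ll_{-\rho}$), so after the twist the generators of ${}^{\perp}\langle\Ll_{-\rho}\rangle$ are exactly the $\Ll_{\chi-2\rho}$ — no; the correct bookkeeping, which I would pin down carefully against the proof of Lemma~\ref{lem:mainlemma}, is that the category $\mathcal C$ there is generated by $\pi_{\alpha}^{\ast}{\pi_{\alpha}}_{\ast}\Ff\otimes\Ll_{-\rho}$, and $\mathcal C={}^{\perp\perp}$ of $\langle\Oo\rangle$ was identified with $\langle\Oo\rangle^{\perp}$; untwisting $\mathcal C$ by $\Ll_{\rho}$ gives a subcategory generated by $\pi_{\alpha}^{\ast}{\pi_{\alpha}}_{\ast}\Ff$, i.e. by line bundles $\Ll_{\chi}$ pulled back from ${\bf G}/{\bf P}_{\alpha}$, which are precisely those with $\langle\chi,\alpha^{\vee}\rangle=0$. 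And ${}^{\perp}\langle\Ll_{-\rho}\rangle=\langle\Oo\rangle^{\perp}\otimes\Ll_{-\rho}=\mathcal C\otimes\Ll_{-\rho}$ equals $(\mathcal C\otimes\Ll_{\rho})$ twisted by $\Ll_{-2\rho}$ — so the cleanest route is to avoid the intermediate $\langle\Oo\rangle$ altogether.

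In fact the slickest plan is to redo the proof of Lemma~\ref{lem:mainlemma} directly with $\Ll_{-\rho}$ in place of $\Oo_{{\bf G}/{\bf B}}$, rather than twisting afterwards, since the proof of Lemma~\ref{lem:mainlemma} already runs through the category $\mathcal C$ generated by $\pi_{\alpha}^{\ast}{\pi_{\alpha}}_{\ast}\Ff\otimes\Ll_{-\rho}$. Concretely: I would tensor the whole argument of Lemma~\ref{lem:mainlemma} by $\Ll_{-\rho}$ uniformly. The subcategory $\langle\Ll_{-\rho}\rangle$ is admissible by Corollary~\ref{cor:admisscor}, so ${}^{\perp}\langle\Ll_{-\rho}\rangle$ is admissible and $\Dd^b({\bf G}/{\bf B})=\langle\langle\Ll_{-\rho}\rangle,{}^{\perp}\langle\Ll_{-\rho}\rangle\rangle$. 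One checks $\Hom^{\bullet}(\Ff',\Ll_{-\rho})=0$ exactly when, writing $\Ff'=\Ll_{\chi-\rho}$ with $\langle\chi,\alpha^{\vee}\rangle=0$ — wait, the orthogonality to be verified is against objects of ${}^{\perp}\langle\Ll_{-\rho}\rangle$, which I claim is generated by the $\Ll_{\chi}$ with $\langle\chi,\alpha^{\vee}\rangle=0$ for some simple $\alpha^{\vee}$. For such $\chi$, $\Hom^{\bullet}(\Ll_{\chi},\Ll_{-\rho})=\Hom^{\bullet}(\Oo,\Ll_{-\rho-\chi})={\mathbb H}^{\ast}({\bf G}/{\bf B},\Ll_{-\rho-\chi})$; since $-\rho-\chi+\rho=-\chi$ pairs to $0$ with $\alpha^{\vee}$, Remark~\ref{rem:Demazure_bits_over_Z} gives acyclicity, so these line bundles lie in ${}^{\perp}\langle\Ll_{-\rho}\rangle$ and, exactly as in Lemma~\ref{lem:mainlemma}, they generate a subcategory $\mathcal C'$ equal to $\bigcap_{\alpha}(\ker{\pi_{\alpha}}_{\ast})^{\perp}$-type description; then for $\sf M\in{}^{\perp}\mathcal C'$ one shows $\sf M=\pi_{\alpha}^{\ast}{\pi_{\alpha}}_{\ast}\sf M$ for all $\alpha$ via the same Serre-duality computation (now the twist by $\Ll_{-\rho}$ appears symmetrically on both sides and cancels), and concludes by Proposition~\ref{prop:derverDemazure_char_for} that ${\sf M}\in\langle\Ll_{-\rho}\rangle$ — hence ${}^{\perp}\langle\Ll_{-\rho}\rangle={}^{\perp}\mathcal C'$ forces $\mathcal C'={}^{\perp}\langle\Ll_{-\rho}\rangle$ by Lemma~\ref{lem:admiss_orthogonal}. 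The main obstacle, and the only place requiring genuine care, is the bookkeeping of the $\rho$-shifts: making sure that the weight condition comes out as the clean $\langle\chi,\alpha^{\vee}\rangle=0$ stated in the Corollary rather than a shifted variant, which amounts to tracking that the "degree $-1$ along $\pi_{\alpha}$" line bundle used in the Theorem~\ref{th:Orvlovth} step is $\Ll_{-\rho}$ itself, so that after tensoring the base-change objects $\pi_{\alpha}^{\ast}{\pi_{\alpha}}_{\ast}\Ff$ (which carry line bundles $\Ll_{\chi}$ with $\langle\chi,\alpha^{\vee}\rangle=0$) by $\Ll_{-\rho}$ and then comparing with the untwisted orthogonality, the shifts match up correctly.
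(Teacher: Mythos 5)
There is a genuine gap, and it is exactly the point your bookkeeping keeps tripping over: the distinction between the left and the right orthogonal of $\langle\Ll_{-\rho}\rangle$. Tensoring the decomposition $\Dd^b({\bf G}/{\bf B})=\langle\langle\Oo_{{\bf G}/{\bf B}}\rangle^{\perp},\langle\Oo_{{\bf G}/{\bf B}}\rangle\rangle$ by $\Ll_{-\rho}$ produces $\langle\langle\Oo_{{\bf G}/{\bf B}}\rangle^{\perp}\otimes\Ll_{-\rho},\langle\Ll_{-\rho}\rangle\rangle$, whose first factor is the \emph{right} orthogonal $\langle\Ll_{-\rho}\rangle^{\perp}$, i.e.\ the objects $\Ee$ with $\Hom^{\bullet}(\Ll_{-\rho},\Ee)=0$. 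The Corollary concerns the \emph{left} orthogonal ${}^{\perp}\langle\Ll_{-\rho}\rangle$, the objects with $\Hom^{\bullet}(\Ee,\Ll_{-\rho})=0$, which is the \emph{other} complement of $\langle\Ll_{-\rho}\rangle$. These two subcategories are genuinely different (they differ by the Serre functor), so your assertion that ``the order of the two pieces is swapped\ldots but the content is the same'' is false, and the identification ${}^{\perp}\langle\Ll_{-\rho}\rangle=\langle\Oo_{{\bf G}/{\bf B}}\rangle^{\perp}\otimes\Ll_{-\rho}$ is wrong. This is precisely why you land on the spurious condition $\langle\psi+2\rho,\alpha^{\vee}\rangle=0$ instead of $\langle\psi,\alpha^{\vee}\rangle=0$: you are off by the canonical twist $\Ll_{-2\rho}$, and the discrepancy you noticed is a real error, not a matter of ``indexing convention.''

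The missing step, which the paper supplies in one line, is Serre duality with $\omega_{{\bf G}/{\bf B}}=\Ll_{-2\rho}$: for any $\Ee$ one has $\Hom^{\bullet}(\Ee,\Ll_{-\rho})=\Hom^{\bullet}(\Ll_{-\rho},\Ee\otimes\Ll_{-2\rho}[{\rm dim}\,{\bf G}/{\bf B}])^{\ast}=\Hom^{\bullet}(\Oo_{{\bf G}/{\bf B}},\Ee\otimes\Ll_{-\rho}[{\rm dim}\,{\bf G}/{\bf B}])^{\ast}$, whence ${}^{\perp}\langle\Ll_{-\rho}\rangle=\langle\Oo_{{\bf G}/{\bf B}}\rangle^{\perp}\otimes\Ll_{+\rho}$; the twist is by $+\rho$, not $-\rho$, and the fact that $-\rho$ is half the canonical class is what makes the shift come out symmetric. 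Transporting the generators $\Ll_{\chi}$ with $\langle\chi+\rho,\alpha^{\vee}\rangle=0$ of Lemma \ref{lem:mainlemma} through $\otimes\Ll_{\rho}$ then yields exactly the $\Ll_{\psi}$ with $\langle\psi,\alpha^{\vee}\rangle=0$. Your third-paragraph fallback (verify directly that these $\Ll_{\psi}$ are left orthogonal to $\Ll_{-\rho}$ via acyclicity of $\Ll_{-\rho-\psi}$, then rerun the whole argument of Lemma \ref{lem:mainlemma}) could be made to work, but as written it is only a sketch that again leaves the $\rho$-shifts unresolved and proves only one of the two required inclusions; the Serre-duality reduction renders all of it unnecessary.
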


\begin{proof}
 Let $\Ee \in $$^{\perp}\langle \Ll _{-\rho}\rangle$, then by Serre duality 

\begin{eqnarray}
& \Hom _{{\bf G}/{\bf B}}^{\bullet}(\Ee,\Ll _{-\rho})= \Hom  _{{\bf G}/{\bf B}}^{\bullet}(\Ll _{-\rho},\Ee \otimes \Ll _{-2\rho}[{\rm dim}({\bf G}/{\bf B}]))^{\ast} = \\
& \Hom  _{{\bf G}/{\bf B}}^{\bullet}(\Oo _{{\bf G}/{\bf B}},\Ee \otimes \Ll _{-\rho}[{\rm dim}({\bf G}/{\bf B}]))^{\ast}=0.\nonumber
\end{eqnarray}

\vspace{0.2cm}

Therefore, up to a twist by the line bundle $\Ll _{\rho}$, the category $^{\perp} \langle \Ll _{-\rho}\rangle$  is equivalent to the subcategory $\langle \Oo _{{\bf G}/{\bf B}}\rangle ^{\perp}$.  Lemma \ref{lem:mainlemma} implies the statement.
\end{proof}

\vspace*{0.3cm}

\section{\bf The Steinberg line bundle}

\vspace*{0.3cm}

Consdier the admissible subcategory $\langle \Ll _{-\rho}\rangle$ of $\Dd ^b(\bf G/B)$.
It follows from the above that the isomorphism ${\sf F}^n_{\ast}\Ll _{-\rho} = {\sf St}_q\otimes \Ll _{-\rho}$ is equivalent to the following statement:

\begin{theorem}\label{th:Haboushth}
One has ${\sf F}^n_{\ast}\Ll _{-\rho}\in \langle \Ll _{-\rho}\rangle$.
\end{theorem}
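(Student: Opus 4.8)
The plan is to verify the single orthogonality condition $\Hom^{\bullet}_{{\bf G}/{\bf B}}({^{\perp}\langle \Ll _{-\rho}\rangle}, {\sf F}^n_{\ast}\Ll _{-\rho}) = 0$, which by the semiorthogonal decomposition $\Dd ^b({\bf G}/{\bf B}) = \langle \langle \Ll _{-\rho}\rangle, {^{\perp}\langle \Ll _{-\rho}\rangle}\rangle$ forces ${\sf F}^n_{\ast}\Ll _{-\rho}$ into $\langle \Ll _{-\rho}\rangle$. By Corollary \ref{cor:maincorollary}, the left orthogonal ${^{\perp}\langle \Ll _{-\rho}\rangle}$ is generated as an admissible subcategory by the line bundles $\Ll _{\chi}$ with $\langle \chi, \alpha ^{\vee}\rangle = 0$ for some simple coroot $\alpha ^{\vee}$. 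Since $\Hom^{\bullet}$ out of a generating set of a triangulated subcategory detects membership in the right orthogonal, it suffices to show $\Hom^{\bullet}_{{\bf G}/{\bf B}}(\Ll _{\chi}, {\sf F}^n_{\ast}\Ll _{-\rho}) = 0$ for every such $\chi$.

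First I would use the adjunction $({\sf F}^n)^{\ast} \dashv {\sf F}^n_{\ast}$ (the Frobenius morphism being finite, hence affine, so ${\sf F}^n_{\ast}$ is exact and the adjunction holds on the nose):
\begin{equation*}
\Hom^{\bullet}_{{\bf G}/{\bf B}}(\Ll _{\chi}, {\sf F}^n_{\ast}\Ll _{-\rho}) = \Hom^{\bullet}_{{\bf G}/{\bf B}}(({\sf F}^n)^{\ast}\Ll _{\chi}, \Ll _{-\rho}) = \Hom^{\bullet}_{{\bf G}/{\bf B}}(\Ll _{q\chi}, \Ll _{-\rho}) = {\mathbb H}^{\bullet}({\bf G}/{\bf B}, \Ll _{-q\chi - \rho}).
\end{equation*}
So the whole statement reduces to the cohomology vanishing ${\rm H}^i({\bf G}/{\bf B}, \Ll _{-q\chi - \rho}) = 0$ for all $i$, whenever $\langle \chi, \alpha ^{\vee}\rangle = 0$ for some simple coroot $\alpha ^{\vee}$. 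For such $\chi$ one computes $\langle -q\chi - \rho + \rho, \alpha ^{\vee}\rangle = \langle -q\chi, \alpha ^{\vee}\rangle = 0$, i.e. the weight $-q\chi - \rho$ lies on the $\alpha$-wall in the sense that $\langle (-q\chi - \rho) + \rho, \alpha ^{\vee}\rangle = 0$. By Remark \ref{rem:Demazure_bits_over_Z} — the one piece of Bott's theorem that survives in arbitrary characteristic, coming from the Lemma of \cite[Section 2]{Dem} — this line bundle is acyclic. That gives the required vanishing, hence ${\sf F}^n_{\ast}\Ll _{-\rho} \in \langle \Ll _{-\rho}\rangle$.

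I expect the only real subtlety to be bookkeeping: making sure the adjunction and the identification $({\sf F}^n)^{\ast}\Ll _{\chi} = \Ll _{q\chi}$ are applied correctly (these are standard properties of the Frobenius twist on ${\bf G}/{\bf B}$), and checking that "$\Hom^{\bullet}$ vanishes on a generating set" genuinely suffices to conclude membership in the right orthogonal of an admissible subcategory — this follows because the objects $\mathcal G$ with $\Hom^{\bullet}(\Ll _{\chi}, \mathcal G) = 0$ for all generators form a triangulated subcategory closed under shifts and cones containing nothing that maps nontrivially from ${^{\perp}\langle \Ll _{-\rho}\rangle}$, so the canonical triangle decomposing ${\sf F}^n_{\ast}\Ll _{-\rho}$ against the decomposition has vanishing ${^{\perp}\langle \Ll _{-\rho}\rangle}$-component. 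There is no genuinely hard step once Lemma \ref{lem:mainlemma} and Corollary \ref{cor:maincorollary} are in hand; the derived-category work was front-loaded into Section \ref{sec:main}, and Theorem \ref{th:Haboushth} is then a short formal consequence, with the remaining identification of the multiplicity space with ${\sf St}_q$ handled exactly as sketched in the introduction (tensor by $\Ll _{\rho}$, take ${\rm H}^0$, invoke Theorem \ref{th:Bott-Demazure_th}(c) together with the dimension count $q^{\dim({\bf G}/{\bf B})}$).
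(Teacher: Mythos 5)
Your proposal is correct and follows essentially the same route as the paper: reduce via Corollary \ref{cor:maincorollary} to the vanishing of $\Hom^{\bullet}_{\bf G/B}(\Ll_{\chi},{\sf F}^n_{\ast}\Ll_{-\rho})={\mathbb H}^{\ast}({\bf G/B},\Ll_{-q\chi-\rho})$ for the generators $\Ll_{\chi}$ with $\langle\chi,\alpha^{\vee}\rangle=0$, and conclude by the wall condition $\langle -q\chi-\rho+\rho,\alpha^{\vee}\rangle=0$ together with Remark \ref{rem:Demazure_bits_over_Z}. The only cosmetic difference is that you phrase the key identity via the adjunction $({\sf F}^n)^{\ast}\dashv{\sf F}^n_{\ast}$ while the paper implicitly uses the projection formula; these are the same computation.
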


\begin{proof}
By Corollary \ref{cor:maincorollary}, the fact that the bundle ${\sf F}^n_{\ast}\Ll _{-\rho}$ belongs to the subcategory $\langle \Ll _{-\rho}\rangle\subset \Dd ^b(\bf G/B)$ is equivalent to saying that ${\sf F}^n_{\ast}\Ll _{-\rho}$ is right orthogonal to the subcategory 
$\langle \Ll _{\chi}\rangle$ generated by all $\Ll _{\chi}$, where $\langle \chi,\alpha ^{\vee}\rangle =0$ for some simple coroot $\alpha ^{\vee}\subset {\rm R}^{\vee}$.  In other words, one has to ensure that

\begin{equation}\label{eq:Steinbergisom}
\Hom ^{\bullet}_{\bf G/B}( \Ll _{\chi},{\sf F}^n_{\ast}\Ll _{-\rho})={\mathbb H}^{\ast}({\bf G/B},\Ll _{-p^n\chi - \rho})=0.
\end{equation}

\vspace{0.2cm}

By Remark \ref{rem:Demazure_bits_over_Z}, the line bundle $\Ll _{\mu}$ is acyclic if $\langle \mu +\rho,\alpha ^{\vee}\rangle = 0$ for some simple coroot $\alpha ^{\vee}$. Taking $\mu = -p^n\chi - \rho$, one obtains $\langle \mu +\rho,\alpha ^{\vee}\rangle =
\langle -p^n\chi - \rho +\rho,\alpha ^{\vee}\rangle = -p^n\langle \chi,\alpha ^{\vee}\rangle =0$. Hence, the bundle $\Ll _{-p^n\chi - \rho}$ is acyclic, and (\ref{eq:Steinbergisom}) holds.
\end{proof}

\vspace{0.3cm}

\end{document}